\numberwithin{equation}{section}
\numberwithin{figure}{section}
\theoremstyle{plain}
\newtheorem{thm}{\protect\theoremname}
\theoremstyle{plain}
\newtheorem{cor}[thm]{\protect\corollaryname}
\theoremstyle{plain}
\newtheorem{lem}[thm]{\protect\lemmaname}
\theoremstyle{remark}
\patchcmd{\thmhead}{(#3)}{#3}{}{}
\providecommand{\corollaryname}{Corollary}
\providecommand{\lemmaname}{Lemma}
\providecommand{\remarkname}{Remark}
\providecommand{\theoremname}{Theorem}
\begin{document}
\global\long\def\e{e}%
\global\long\def\V{{\rm Vol}}%
\global\long\def\bs{\boldsymbol{\sigma}}%
\global\long\def\bx{\mathbf{x}}%
\global\long\def\by{\mathbf{y}}%
\global\long\def\bv{\mathbf{v}}%
\global\long\def\bu{\mathbf{u}}%
\global\long\def\bn{\mathbf{n}}%
\global\long\def\bY{\mathbf{Y}}%
\global\long\def\grad{\nabla_{sp}}%
\global\long\def\Hess{\nabla_{sp}^{2}}%
\global\long\def\lp{\Delta_{sp}}%
\global\long\def\gradE{\nabla_{\text{Euc}}}%
\global\long\def\HessE{\nabla_{\text{Euc}}^{2}}%
\global\long\def\HessEN{\hat{\nabla}_{\text{Euc}}^{2}}%
\global\long\def\ddq{\frac{d}{dR}}%
\global\long\def\qs{q_{\star}}%
\global\long\def\qss{q_{\star\star}}%
\global\long\def\lm{\lambda_{min}}%
\global\long\def\Es{E_{\star}}%
\global\long\def\EH{E_{\Hess}}%
\global\long\def\Esh{\hat{E}_{\star}}%
\global\long\def\ds{d_{\star}}%
\global\long\def\Cs{\mathscr{C}_{\star}}%
\global\long\def\nh{\boldsymbol{\hat{\mathbf{n}}}}%
\global\long\def\BN{\mathbb{B}^{N}}%
\global\long\def\ii{\mathbf{i}}%
\global\long\def\SN{\mathbb{S}^{N-1}}%
\global\long\def\SNq{\mathbb{S}^{N-1}(q)}%
\global\long\def\SNqd{\mathbb{S}^{N-1}(q_{d})}%
\global\long\def\SNqp{\mathbb{S}^{N-1}(q_{P})}%
\global\long\def\nd{\nu^{(\delta)}}%
\global\long\def\nz{\nu^{(0)}}%
\global\long\def\cls{c_{LS}}%
\global\long\def\qls{q_{LS}}%
\global\long\def\dls{\delta_{LS}}%
\global\long\def\E{\mathbb{E}}%
\global\long\def\P{\mathbb{P}}%
\global\long\def\R{\mathbb{R}}%
\global\long\def\spp{{\rm Supp}(\mu_{P})}%
\global\long\def\indic{\mathbf{1}}%
\global\long\def\lsc{\mu_{{\rm sc}}}%
\newcommand{\SNarg}[1]{\mathbb S^{N-1}(#1)}
\global\long\def\se{s(E)}%
\global\long\def\ses{s(\Es)}%
\global\long\def\so{s(0)}%
\global\long\def\sef{s(E_{f})}%
\global\long\def\seinf{s(E_{\infty})}%
\global\long\def\L{\mathcal{L}}%
\global\long\def\gflow#1#2{\varphi_{#2}(#1)}%
\newcommand{\corO}{\textcolor{red}}
\newcommand{\corE}{\textcolor{blue}}

\title{Concentration of the complexity of spherical pure $p$-spin models at
arbitrary energies}
\author{Eliran Subag}
\address{\tiny{Eliran Subag, Department of Mathematics, Weizmann Institute of Science, Rehovot 76100, Israel.}}
\email{eliran.subag@weizmann.ac.il}

\author{Ofer Zeitouni}
\address{\tiny{Ofer Zeitouni, Department of Mathematics, Weizmann Institute of Science, Rehovot 76100, Israel.}}
\email{ofer.zeitouni@weizmann.ac.il}

\thanks{This project has received funding from the European Research Council (ERC) under the European Union's Horizon 2020 research and innovation programme (grant agreement No. 692452) and from the Israel Science Foundation
	(grant agreement No. 2055/21).}

%\maketitle
\begin{abstract}
  We consider critical points of the
  spherical \emph{pure }$p$-spin spin glass
model with Hamiltonian
$H_{N}\left(\boldsymbol{\sigma}\right)=\frac{1}{N^{\left(p-1\right)/2}}\sum_{i_{1},...,i_{p}=1}^{N}J_{i_{1},...,i_{p}}\sigma_{i_{1}}\cdots\sigma_{i_{p}}$,
where $\boldsymbol{\sigma}=\left(\sigma_{1},...,\sigma_{N}\right)\in
\SN:=\left\{ \boldsymbol{\sigma}\in\mathbb{R}^{N}:\,\left\Vert \boldsymbol{\sigma}\right\Vert _{2}=\sqrt{N}\right\} $
and $J_{i_{1},...,i_{p}}$ are i.i.d standard normal variables.
Using a second moment analysis, we prove that for $p\geq 32$ and
any $E>-\Es$, where
$\Es$ is the (normalized) ground state,
the ratio of the number
of critical points $\boldsymbol{\sigma}$ with
$H_N(\boldsymbol{\sigma})\leq NE$ and its expectation
asymptotically concentrates
at $1$.
This extends to arbitrary $E$ a similar conclusion of \cite{2nd}.
\end{abstract}
\maketitle
\section{Introduction}

The Hamiltonian of the spherical \emph{pure }$p$-spin spin glass
model is given by
\begin{equation}
H_{N}\left(\boldsymbol{\sigma}\right):=\frac{1}{N^{\left(p-1\right)/2}}\sum_{i_{1},...,i_{p}=1}^{N}J_{i_{1},...,i_{p}}\sigma_{i_{1}}\cdots\sigma_{i_{p}},\label{eq:Hamiltonian}
\end{equation}
where $\boldsymbol{\sigma}=\left(\sigma_{1},...,\sigma_{N}\right)$
belongs to the sphere  $\SN:=\left\{ \boldsymbol{\sigma}\in\mathbb{R}^{N}:\,\left\Vert \boldsymbol{\sigma}\right\Vert _{2}=\sqrt{N}\right\} $,
and $J_{i_{1},...,i_{p}}$ are i.i.d standard normal variables. The
model was introduced by Crisanti and Sommers \cite{Crisanti1992}
for general $p\geq2$ as a continuous variant of the same models with
Ising spins, while for $p=2$ it was already considered by Kosterlitz,
Thouless and Jones \cite{Kosterlitz} in analogy to the Sherrington-Kirkpatrick
model \cite{SK75}.

For any subset $B\subset\mathbb{R}$, let $\mbox{Crt}_{N}\left(B\right)$
be the number of critical points of $H_{N}$, at which it attains
a value in $NB=\left\{ Nx:\,x\in B\right\} $,
\[
\mbox{Crt}_{N}\left(B\right):=\#\left\{ \boldsymbol{\sigma}\in\SN:\,\nabla H_{N}\left(\boldsymbol{\sigma}\right)=0,\,H_{N}\left(\boldsymbol{\sigma}\right)\in NB\right\} ,
\]
where $\nabla H_{N}\left(\boldsymbol{\sigma}\right)$ denotes the
spherical gradient of $H_{N}\left(\boldsymbol{\sigma}\right)$. We refer
to $\mbox{Crt}_{N}\left( (-\infty,u)\right)$ as the \textit{complexity} of
$H_N$ at energies below $Nu$. In
their seminal work \cite{A-BA-C}, Auffinger, Ben Arous, and {\v{C}}ern{\'y}
proved that
\begin{equation}
\lim_{N\to\infty}\frac{1}{N}\log\E{\rm Crt}_{N}\left(\left(-\infty,u\right)\right)=\Theta_{p}(u),\label{eq:complexity}
\end{equation}
for
\[
\Theta_{p}(u)=\begin{cases}
\frac{1}{2}\log(p-1)-\frac{p-2}{4(p-1)}u^{2}-J(u) & \text{if }u\leq-E_{\infty},\\
\frac{1}{2}\log(p-1)-\frac{p-2}{4(p-1)}u^{2} & \text{if }-E_{\infty}\leq u\leq0,\\
\frac{1}{2}\log(p-1) & \text{if }u\geq0,
\end{cases}
\]
where $E_{\infty}=E_{\infty}(p)=2\sqrt{\frac{p-1}{p}}$ and for $u\leq-E_{\infty}$,
\[
J(u)=-\frac{u}{E_{\infty}^{2}}\sqrt{u^{2}-E_{\infty}^{2}}-\log\big(-u+\sqrt{u^{2}-E_{\infty}^{2}}\big)+\log E_{\infty}.
\]
A similar result was also proved in \cite{A-BA-C} for the expectation
of the number of critical points of any fixed index $k\geq0$. In
particular, it was shown that for any $k\geq0$ and $\epsilon>0$,
with probability going to $1$ as $N\to\infty$, there are no critical
points $\bs$ of index $k$ with $H_{N}(\bs)\in N(-E_{\infty}+\epsilon,\infty)$.

Define the ground-state energy
\[
-\Es:=\lim_{N\to\infty}\frac{1}{N}\E\min_{\bs\in\SN}H_{N}(\bs).
\]
The value of $\Es$ can be computed from the Parisi formula \cite{Talag,Chen}
for the free energy by letting the temperature go to zero or directly
from its zero temperature analogue \cite{ChenSen,JagannathTobascoLowTemp};
from the TAP representation \cite{FElandscape,SubagPspinTAP} for the free energy;
or from the analysis of critical points, as we will now explain. Define $E_{0}=E_{0}(p)>0$
as the unique value such that $\Theta_{p}(-E_{0})=0$. Then, by Markov's
inequality, $\Es\leq E_{0}$. In fact, it was proved in \cite{A-BA-C}
that $\Es=E_{0}$ using the Parisi formula, whose solution in the
pure case has 1 step of replica symmetry breaking. Another way to
prove the matching bound $\Es\geq E_{0}$ is to show that ${\rm Crt}_{N}((-\infty,u))$
concentrates around its mean for levels $u$ in some right neighborhood of
$-E_{0}$. This was proved in \cite{2nd} by a second moment argument,
independently of the Parisi formula.

Our main result is the following asymptotic matching of the second
moment and first moment squared of $\mbox{Crt}_{N}((-\infty,u))$,
for large enough $p$ and all energies $u>-E_{\infty}$.
\begin{thm}
\label{thm:main}For any $p\geq32$ and $u\in(-E_{\infty},\,\infty)$,
\begin{equation}
\lim_{N\to\infty}\frac{\mathbb{E}\big\{\left({\rm Crt}_{N}\left(\left(-\infty,u\right)\right)\right)^{2}\big\}}{\big(\mathbb{E}\left\{ {\rm Crt}_{N}\left(\left(-\infty,u\right)\right)\right\} \big)^{2}}=1.\label{eq:moment_matching}
\end{equation}
Consequently, in $L^{2}$ and in probability,
\begin{equation}
  \label{eq-120521a}
\frac{{\rm Crt}_{N}\left(\left(-\infty,u\right)\right)}{\E{\rm Crt}_{N}\left(\left(-\infty,u\right)\right)} \overset{N\to\infty}{\longrightarrow} 1.
\end{equation}
\end{thm}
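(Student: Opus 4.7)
The plan is to prove the moment matching (\ref{eq:moment_matching}) by a Kac-Rice analysis on the product manifold $\SN\times\SN$, and then deduce (\ref{eq-120521a}) from Chebyshev's inequality. The starting point is the Kac-Rice formula for the second moment,
\[
\E\bigl\{({\rm Crt}_N((-\infty,u)))^2\bigr\} = \int_{\SN\times\SN} K_N(\bs,\bs';u)\,d\bs\,d\bs',
\]
where $K_N$ is the joint Gaussian density of $(\grad H_N(\bs),\grad H_N(\bs'))$ at zero times the conditional expectation
\[
\E\bigl[|\det\Hess H_N(\bs)||\det\Hess H_N(\bs')|\indic\{H_N(\bs),H_N(\bs')\leq Nu\}\,\big|\,\grad H_N(\bs)=\grad H_N(\bs')=0\bigr].
\]
By the rotational invariance of the law of $H_N$, $K_N$ depends on $(\bs,\bs')$ only through the overlap $R=\langle\bs,\bs'\rangle/N$, and the double integral collapses to a one-dimensional integral in $R\in[-1,1]$ weighted by the spherical co-area factor $(1-R^2)^{(N-3)/2}$. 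After dividing by the square of the single-point Kac-Rice formula underlying \cite{A-BA-C}, the ratio in (\ref{eq:moment_matching}) is exactly a sharp Laplace integral in the single variable $R$.

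Next I would identify the exponential rate
\[
F_p(R,u) := \limsup_{N\to\infty} \frac{1}{N}\log\bigl(K_N(R,u)(1-R^2)^{(N-3)/2}\bigr),
\]
decomposing it into three explicit pieces: (i) the log-density of the constrained gradient pair, computed from the covariance of $(H_N,\grad H_N)$ at two points of overlap $R$; (ii) the asymptotic of $\E[|\det\Hess H_N(\bs)||\det\Hess H_N(\bs')|\,|\,\cdots]$, extracted from a shifted-GOE/semicircle analysis in the spirit of \cite{A-BA-C} but with covariance now depending on $R$; and (iii) the joint conditional tail for $H_N(\bs),H_N(\bs')\leq Nu$. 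At $R=0$ the covariance $\E\{H_N(\bs)H_N(\bs')\}=NR^p$ vanishes and the three pieces factorize, giving $F_p(0,u)=2\Theta_p(u)$. The proof of (\ref{eq:moment_matching}) then reduces to the variational statement
\[
\sup_{R\in[-1,1]} F_p(R,u) = 2\Theta_p(u),\qquad \text{attained uniquely and non-degenerately at }R=0,
\]
combined with a second-order Gaussian expansion of width $\sim N^{-1/2}$ around $R=0$ whose sub-exponential prefactors match $(\E{\rm Crt}_N)^2$ exactly.

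The principal obstacle is proving strict inequality $F_p(R,u)<2\Theta_p(u)$ for every $R\in(-1,1)\setminus\{0\}$, uniformly in $u\in(-E_\infty,\infty)$. In \cite{2nd} the analogous bound was verified only in a right-neighborhood of $-\Es$, where the cost of forcing both heights below $Nu$ is itself so severe that it kills the $R\neq 0$ contribution. For $u>-E_\infty$ the height-constraint is inactive in much of $R$-space, and the needed bound must instead come from the interplay between the constrained-Hessian determinant factor, the Gaussian gradient density, and the spherical Jacobian $(1-R^2)^{(N-3)/2}$. This is precisely where $p\geq 32$ enters: the off-diagonal covariance $NR^p$ decays rapidly in $R$, the shifted GOE spectra at the two points are nearly independent, and consequently $F_p(R,u)-2\Theta_p(u)$ admits a uniformly strictly negative upper bound. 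I would split $[-1,1]$ into three regimes: a small neighborhood of $R=0$ handled by a second-order Taylor expansion of $F_p$ (checking $\partial_R^2 F_p(0;u)<0$); a bulk region $\delta\leq|R|\leq 1-\delta$ treated by direct analytic comparison of the explicit rate functions; and boundary regions $|R|\to 1$ treated by a shrinking-cap Kac-Rice argument in which the near-coincident-point constraint sends $F_p\to-\infty$.

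Once (\ref{eq:moment_matching}) is in hand, (\ref{eq-120521a}) is automatic: ${\rm Var}({\rm Crt}_N)/(\E{\rm Crt}_N)^2\to 0$ is exactly the statement that ${\rm Crt}_N/\E{\rm Crt}_N\to 1$ in $L^2$, which in turn gives convergence in probability.
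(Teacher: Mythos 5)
Your high-level skeleton (Kac--Rice on $\SN\times\SN$, reduction to a one-dimensional Laplace-type integral in the overlap $R$, Chebyshev at the end) is the same as the paper's, but the proposal misplaces the actual difficulty and leaves it unresolved. The variational statement $\sup_{R}F_p(R,u)=2\Theta_p(u)$, attained at $R=0$ --- i.e.\ matching of the moments at \emph{exponential} scale --- is not the new content: it was already proved in \cite{2nd} for all $u\in(-\Es,\infty)$ and all $p\geq 3$, and the paper quotes it rather than reproving it (your claim that \cite{2nd} only covers a right-neighborhood of $-\Es$ confuses the exponential-scale result with the constant-order one, which indeed was only known for $u\in(-\Es,-E_\infty)$). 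Consequently $p\geq 32$ cannot be what makes $F_p(R,u)<2\Theta_p(u)$ for $R\neq 0$; that strict inequality holds for all $p\geq 3$ and is handled in the paper by Lemma \ref{lem:large} together with a Cauchy--Schwarz argument for intermediate overlaps.

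The real obstruction, which your ``second-order Gaussian expansion of width $\sim N^{-1/2}$ around $R=0$ whose sub-exponential prefactors match exactly'' glosses over, is the \emph{constant-order} control of the determinant ratio
\[
\Delta_N(r,u_1,u_2)=\frac{\E\left\{\prod_{i=1,2}\left|\det\left(\mathbf{X}^{(i)}_{N-1}(r)-\cdots\right)\right|\right\}}{\prod_{i=1,2}\E\left\{\left|\det\left(\mathbf{X}_{N-1}-\cdots\right)\right|\right\}}
\]
uniformly over the whole window $|r|\leq C\sqrt{\log N/N}$, where the two conditional Hessians are correlated and the shifts $u_i\in(-E_\infty,E_\infty)$ place the spectral parameter \emph{inside the bulk} of the semicircle law. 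One must show $\limsup_N\Delta_N\leq 1$, not merely $\frac{1}{N}\log\Delta_N\to 0$; checking $\partial_R^2F_p(0,u)<0$ cannot see this, since $\Delta_N$ could a priori carry factors like $e^{cNr^{p-2}}$ or powers of $N$ that destroy the matching at constant order. The paper handles this by writing each conditional Hessian as an independent shifted GOE plus a correlated perturbation of operator norm roughly $|r|^{(p-3)/2}$, invoking a determinant perturbation inequality that requires an eigenvalue-overcrowding estimate near the shift, and controlling the complementary rare event via H\"older's inequality with $k$-th moment bounds on determinants that cost polynomial factors $N^{k(k-1)/2}$; the constraint $p\geq 32$ arises exactly from balancing the exponent $(p-3)/4$ of the perturbation against these polynomial losses. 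Your proposal contains no mechanism for any of this, so as written it establishes at best what \cite{2nd} already proved.
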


The same result was proved in \cite{2nd} for any $u\in(-\Es,-E_{\infty})$
and general $p\geq3$. Moreover, it was proved there that for any
$u\in(-\Es,\,\infty)$ and $p\geq3$, the moments match at exponential
scale, i.e., $\frac{1}{N}\log$ of the ratio in (\ref{eq:moment_matching})
goes to $0$, a fact that will be used in our proof of Theorem \ref{thm:main}.
The value $-E_\infty$ represents a threshold beyond which a certain
shift
(corresponding to $U$,
see \eqref{eq:1stmom-1}) enters the bulk of the spectrum of a certain
GOE matrix $X_{N-1}$ related to the Hessian of $H_N (\bs)$; compared with \cite{2nd},
the latter fact requires a more delicate analysis of
the asymptotics of certain determinants appearing in the computation of
second moments, see \eqref{eq:2ndmom_refinedform-1}. The restriction to
$p\geq 32$ is needed in our analysis of the latter asymptotics. We expect that Theorem \ref{thm:main} remains true for $p\geq 3$, but we do not have a
proof for that; while our methods probably can be refined to yield a lower threshold than $32$, it is clear that additional ingredients are needed to
lower the threshold  to $3$. In particular, the use of H\"{o}lder's inequality in one step
of the proof, see \eqref{eq-Holder},
would need to be replaced by directly evaluating determinants
of correlated matrices, on a rare event.

Combining  \eqref{eq-120521a} with
%the latter with
\cite[Corollary 2]{2nd} and \eqref{eq:complexity}, we obtain the following.

\begin{cor}
	\label{cor:main}
	For any $p\geq32$ and $u\in(-\Es,\,\infty)$,
	\begin{equation*}
	\frac{1}{N}\log {\rm Crt}_{N}\left(\left(-\infty,u\right)\right) \overset{N\to\infty}{\longrightarrow} \Theta_p(u), \qquad \mbox{in probability.}
	\end{equation*}
\end{cor}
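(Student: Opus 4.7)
The plan is to combine the three cited ingredients. For $u > -E_\infty$, Theorem \ref{thm:main} gives the ratio convergence ${\rm Crt}_N((-\infty,u))/\E {\rm Crt}_N((-\infty,u)) \to 1$ in probability, while \eqref{eq:complexity} supplies the exponential asymptotics of the mean; for $u \in (-\Es, -E_\infty)$, \cite[Corollary 2]{2nd} already supplies the concentration of $N^{-1}\log {\rm Crt}_N((-\infty,u))$ around $\Theta_p(u)$. The only point not explicitly covered by either input is the single value $u = -E_\infty$, and I would treat it by a monotonicity sandwich.

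Concretely, for $u > -E_\infty$, set $R_N := {\rm Crt}_N((-\infty,u))/\E {\rm Crt}_N((-\infty,u))$, so $R_N \to 1$ in probability by \eqref{eq-120521a}. Since $\Theta_p(u) > 0$ whenever $u > -\Es$, the mean grows exponentially in $N$, and on the event $\{R_N > 1/2\}$, whose probability tends to $1$, we have ${\rm Crt}_N > 0$ and
\[
\frac{1}{N}\log {\rm Crt}_N((-\infty,u)) \;=\; \frac{1}{N}\log \E{\rm Crt}_N((-\infty,u)) \;+\; \frac{1}{N}\log R_N.
\]
The first term tends to $\Theta_p(u)$ by \eqref{eq:complexity}; for the second, continuity of $\log$ at $1$ combined with $R_N \to 1$ in probability yields $\log R_N \to 0$ in probability, hence $N^{-1}\log R_N \to 0$ as well.

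For the boundary value $u = -E_\infty$ I would use monotonicity of the map $u \mapsto {\rm Crt}_N((-\infty,u))$ together with continuity of $\Theta_p$: given $\epsilon > 0$, pick $u_1 \in (-\Es,-E_\infty)$ and $u_2 > -E_\infty$ with $|\Theta_p(u_j) - \Theta_p(-E_\infty)| < \epsilon$, and sandwich
\[
\frac{1}{N}\log {\rm Crt}_N((-\infty,u_1)) \;\leq\; \frac{1}{N}\log {\rm Crt}_N((-\infty,-E_\infty)) \;\leq\; \frac{1}{N}\log {\rm Crt}_N((-\infty,u_2)),
\]
where the outer bounds converge in probability to values within $\epsilon$ of $\Theta_p(-E_\infty)$ by the two cases already treated. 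Letting $\epsilon \to 0$ finishes the argument.

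No step presents a genuine obstacle: all the real work sits in Theorem \ref{thm:main} and \cite[Corollary 2]{2nd}, and the corollary is essentially a bookkeeping combination of those with \eqref{eq:complexity}. The only mild subtlety is making sure one first restricts to the event $\{{\rm Crt}_N > 0\}$ before taking logarithms and then patches the threshold $u = -E_\infty$ by monotonicity.
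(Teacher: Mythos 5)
Your proposal is correct and follows exactly the paper's own argument: combine \eqref{eq-120521a} with \eqref{eq:complexity} for $u>-E_\infty$, invoke \cite[Corollary 2]{2nd} for $u\in(-\Es,-E_\infty)$, and handle $u=-E_\infty$ by monotonicity of $u\mapsto{\rm Crt}_N((-\infty,u))$ together with continuity of $\Theta_p$. The extra detail you give (restricting to $\{R_N>1/2\}$ before taking logarithms) is a correct elaboration of a step the paper leaves implicit.
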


Before we turn to the proofs, we mention earlier related works concerning the critical points of  spin glass models. We start with the \emph{pure} spherical models which we consider in the current work.
Their mean complexity was first studied in physics in the context of the TAP approach, see for example the works of Crisanti and Sommers \cite{CrisantiSommersTAPpspin} and Crisnati, Leuzzi and Rizzo \cite{TAP-pSPSG4}. Fyodorov \cite{Fyodorov2004} and Fyodorov and Williams \cite{FyodorovWilliams} also studied the mean complexity of a closely related model, using the Kac-Rice formula.
In the rigororus
mathematical literature, the complexity was first studied in the seminal work of Auffinger, Ben Arous, and {\v{C}}ern{\'y} \cite{A-BA-C}. As mentioned above, similarly to \eqref{eq:complexity}, they also computed the mean complexity of critical points of a given index.
Auffinger and Gold \cite{AuffingerGold} recently claimed
that the second moment and first moment squared of the complexity of critical points of any fixed index match at exponential scale, for the relevant energies below $-E_\infty$. In \cite{pspinext} the authors studied the extremal point process formed by critical values in the vicinity of the global minimum of $H_N(\bs)$ and showed that, when properly centered, it converges to a Poisson process of exponential intensity. The connection between the same critical points and the Gibbs measure at sufficiently low temperature was studied in \cite{geometryGibbs}. In particular, it was shown there that the Gibss measure concentrates on spherical bands of temperature dependent radius around few of the deepest critical points.
The relevance of critical points with values macroscopically larger than the ground state energy to the free energy, at temperatures near the critical temperature,
was
recently studied by Ben Arous and Jagannath  \cite{BenArousJagannathShattering}.
%in the context of dynamics
%at temperatures near the critical temperature.}

Moving to the \emph{mixed} spherical models, Auffinger and Ben Arous \cite{ABA2} computed the mean complexity for general mixtures. As in \cite{A-BA-C}, their results also cover the mean complexity of critical points of a given index.
Matching of the second moment and first moment squared of the complexity at exponential scale was proved by the authors and Ben Arous in \cite{geometryMixed} for mixed models sufficiently close to a pure model in an appropriate sense, and energies close to the ground state. For the same models and at low temperature, some analogues of the results on the Gibbs measure from \cite{geometryGibbs} were obtained in \cite{geometryMixed}. In contrast to the pure models, for the mixed models the Gibbs measure was shown there to concentrate on bands around points which approximately maximize the restriction of $H_N(\bs)$ to a sphere of radius $\sqrt{Nq}$, for appropriate $q\in(0,1)$. For general mixed spherical models at arbitrary sub-critical temperature the same was proved in \cite{FElandscape}. In fact, when adding an appropriate TAP correction, the same even holds for general mixed models with Ising spins, see the works of Chen, Panchenko and
the first author \cite{TAPChenPanchenkoSubag,TAPIIChenPanchenkoSubag}.

For the \emph{bipartite} spherical mixed models, bounds on the mean complexity were computed by Auffinger and Chen \cite{AuffingerChenBipartite}. Recently, McKenna \cite{McKennaBipartite} computed the exact asymptotics of the complexity, building on a more general framework  developed by him, Ben Arous and Bourgade in \cite{BA-B-MK2,BA-B-MK1}. Finally, Kivimae \cite{KivimaeBipartite} proved the asymptotic matching of second moment and first moment squared of the complexity at exponential scale for the pure bipartite models with large enough number of spins and at energies sufficiently close to the ground state.

%The overlap of two points $\bs$ and $\bs'$ is defined as $R(\bs,\bs'):=\frac{1}{N}\bs\cdot\bs'$.
%For any subset $I_{R}\subset\left[-1,1\right]$, we define
%
%\[
%\begin{aligned}\left[{\rm Crt}_{N}\left(B,I_{R}\right)\right]_{2}:=\# & \Big\{\left(\boldsymbol{\sigma},\boldsymbol{\sigma}'\right)\in\left(\SN\right)^{2}:\,R\left(\boldsymbol{\sigma},\boldsymbol{\sigma}'\right)\in I_{R},\\
% & \nabla H_{N}\left(\boldsymbol{\sigma}\right)=\nabla H_{N}\left(\boldsymbol{\sigma}'\right)=0,\,H_{N}\left(\boldsymbol{\sigma}\right),\,H_{N}\left(\boldsymbol{\sigma}'\right)\in NB\Big\}.
%\end{aligned}
%\]
%That is, $\left[\mbox{Crt}_{N}\left(B,I_{R}\right)\right]_{2}$ is
%the number of pairs of critical points $(\boldsymbol{\sigma},\boldsymbol{\sigma}')$
%with overlap in $I_{R}$ and values in $NB$. Note that $\mathbb{E}\left[\mbox{Crt}_{N}\left(B,[-1,1]\right)\right]_{2}$
%is the second moment of $\mbox{Crt}_{N}\left(B\right)$.
%
%The proof of Theorem \ref{thm:main} will show that most of the contribution
%to the second moment of ${\rm Crt}_{N}((-\infty,u))$ comes from points
%which roughly have overlap zero. We thus obtain the following corollary.
%\begin{cor}
%\label{cor:Orth}For any $p\geq32$, $\epsilon>0$ and $u\in(-\Es,\,\infty)$,
%there exists a constant $c>0$ such that with probability going to
%$1$ as $N\to\infty$,
%\[
%\frac{\left[{\rm Crt}_{N}\left(B,[-1,1]\setminus(-\epsilon,\epsilon)\right)\right]_{2}}{{\rm Crt}_{N}\left(\left(-\infty,u\right)\right)^{2}}<e^{-cN}.
%\]
%\end{cor}

\section{Proofs}

The overlap of two points $\bs$ and $\bs'$ is defined as $R(\bs,\bs'):=\frac{1}{N}\bs\cdot\bs'$.
For any subset $I_{R}\subset\left[-1,1\right]$, we define

\[
\begin{aligned}\left[{\rm Crt}_{N}\left(B,I_{R}\right)\right]_{2}:=\# & \Big\{\left(\boldsymbol{\sigma},\boldsymbol{\sigma}'\right)\in\left(\SN\right)^{2}:\,R\left(\boldsymbol{\sigma},\boldsymbol{\sigma}'\right)\in I_{R},\\
	& \qquad \nabla H_{N}\left(\boldsymbol{\sigma}\right)=\nabla H_{N}\left(\boldsymbol{\sigma}'\right)=0,\,H_{N}\left(\boldsymbol{\sigma}\right),\,H_{N}\left(\boldsymbol{\sigma}'\right)\in NB\Big\}.
\end{aligned}
\]
That is, $\left[\mbox{Crt}_{N}\left(B,I_{R}\right)\right]_{2}$ is
the number of pairs of critical points $(\boldsymbol{\sigma},\boldsymbol{\sigma}')$
with overlap in $I_{R}$ and values in $NB$. Note that $\mathbb{E}\left[\mbox{Crt}_{N}\left(B,[-1,1]\right)\right]_{2}$
is the second moment of $\mbox{Crt}_{N}\left(B\right)$.

Our proof starts from the integral formulas for $\E{\rm Crt}_{N}\left(B\right)$
and $\E\left[\mbox{Crt}_{N}\left(B,I_{R}\right)\right]_{2}$ derived
in \cite{A-BA-C} and \cite{2nd}, using the Kac-Rice formula, which
we recall in Section \ref{subsec:Kac-Rice}. When we substitute the
latter formulas to (\ref{eq:moment_matching}), the proof of Theorem
\ref{thm:main} reduces to establishing certain upper bounds for large
$N$ on the corresponding integrands, which depend on an overlap value
in $[-1,1]$. These bounds will be proved in Sections \ref{subsec:large_overlaps}-\ref{subsec:small_overlaps}
for different ranges of the overlap. In Section \ref{subsec:Combining-the-bounds}
we will combine those bounds and complete the proof of Theorem \ref{thm:main}.
Lastly, in Section \ref{sec:pfofcor1} we will prove Corollary \ref{cor:main}.

\subsection{\label{subsec:Kac-Rice}Kac-Rice formulas }

The following representation for the expectation of $\mbox{Crt}_{N}\left(B\right)$
was proved in \cite{A-BA-C}, using the Kac-Rice formula (for the latter,
see e.g.
\cite[Theorem 12.1.1]{RFG}). Throughout the paper, the normalization
we use for a GOE matrix of dimension $N\times N$ is that the variance
is $1/N$ off-diagonal and $2/N$ on-diagonal.  We denote by $I$ the identity matrix.
\begin{lem}
\cite[Lemmas 3.1, 3.2]{A-BA-C}\label{lem:KR1stmoment} For any $p\geq3$
and any interval $B\subset\R$,
\begin{equation}
\mathbb{E}\left\{ {\rm Crt}_{N}\left(B\right)\right\} =\omega_{N}\left(\frac{p-1}{2\pi}(N-1)\right)^{\frac{N-1}{2}}\mathbb{E}\left\{ \left|\det\left(\mathbf{X}_{N-1}-\sqrt{\frac{1}{N-1}\frac{p}{p-1}}UI\right)\right|\mathbf{1}\Big\{ U\in\sqrt{N}B\Big\}\right\} ,\label{eq:1stmom-1}
\end{equation}
where $\mathbf{X}_{N-1}$ is a GOE matrix of dimension $N-1\times N-1$
independent of $U\sim N\left(0,1\right)$, and
\[
\omega_{N}:=\frac{2\pi^{N/2}}{\Gamma\left(N/2\right)}
\]
is the surface area of the $N-1$-dimensional unit sphere.
\end{lem}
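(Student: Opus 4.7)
The plan is to apply the Kac-Rice formula in the form found in \cite[Theorem 12.1.1]{RFG} to the smooth Gaussian field $H_N$ on the compact Riemannian manifold $\SN$, and then exploit the rotational invariance of the distribution of $H_N$ to reduce to a single-point computation. Since the joint law of $(H_N(\bs),\grad H_N(\bs),\Hess H_N(\bs))$ does not depend on $\bs\in\SN$, the Kac-Rice integral collapses to
\[
\E\,{\rm Crt}_N(B)=\V(\SN)\cdot\varphi_{\grad H_N(\bs_0)}(0)\cdot\E\!\left[|\det\Hess H_N(\bs_0)|\,\indic\{H_N(\bs_0)\in NB\}\,\Big|\,\grad H_N(\bs_0)=0\right],
\]
where $\bs_0=\sqrt{N}\mathbf{e}_N$ and $\V(\SN)=\omega_N N^{(N-1)/2}$. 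The remaining work is to identify, under this conditioning, the law of the spherical Hessian and of the gradient density at zero.

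The second step is a direct moment computation using the covariance $C(\bs,\bs')=N(\bs\cdot\bs'/N)^p$. Differentiating up to four times and evaluating at $\bs=\bs'=\bs_0$ yields three facts. First, $H_N(\bs_0)\sim N(0,N)$, so writing $H_N(\bs_0)=\sqrt{N}U$ one has $U\sim N(0,1)$. Second, for $i<N$ the tangential partials $\partial_i H_N(\bs_0)$ are i.i.d.\ $N(0,p)$ and uncorrelated with both $H_N(\bs_0)$ and the tangential entries of $\HessE H_N(\bs_0)$; in particular $\varphi_{\grad H_N(\bs_0)}(0)=(2\pi p)^{-(N-1)/2}$. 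Third, the $(N-1)\times(N-1)$ tangential block $M$ of $\HessE H_N(\bs_0)$ satisfies $\E[M_{ij}M_{kl}]=\frac{p(p-1)}{N}(\delta_{ik}\delta_{jl}+\delta_{il}\delta_{jk})$, which is exactly the covariance structure of $\sqrt{p(p-1)(N-1)/N}\,\mathbf{X}_{N-1}$ with $\mathbf{X}_{N-1}$ a GOE matrix under the normalization of the paper, and moreover $M$ is independent of $(H_N(\bs_0),\grad H_N(\bs_0))$ by the vanishing covariance check above.

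The third step is to pass from the Euclidean restricted Hessian to the spherical Hessian. Since $H_N$ is $p$-homogeneous, Euler's identity gives $\bs\cdot\gradE H_N(\bs)=pH_N(\bs)$, and the standard formula relating the Riemannian Hessian on the sphere of radius $\sqrt N$ to the Euclidean Hessian yields
\[
\Hess H_N(\bs_0)=M-\frac{p}{N}H_N(\bs_0)\,I_{N-1}=\sqrt{\frac{p(p-1)(N-1)}{N}}\left(\mathbf{X}_{N-1}-\sqrt{\frac{1}{N-1}\frac{p}{p-1}}\,U\,I\right),
\]
where the last equality comes from factoring the scalar and substituting $H_N(\bs_0)=\sqrt{N}U$. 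Independence of $\mathbf{X}_{N-1}$ from $U$ is preserved under this affine transformation, so the conditional expectation in Kac-Rice becomes an unconditional expectation over the independent pair $(\mathbf{X}_{N-1},U)$.

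The last step is bookkeeping of constants. Multiplying the volume factor $\omega_N N^{(N-1)/2}$, the gradient density $(2\pi p)^{-(N-1)/2}$, and the scalar pulled out of the determinant $(p(p-1)(N-1)/N)^{(N-1)/2}$, the factors of $N$ and $p$ telescope into $((p-1)(N-1)/(2\pi))^{(N-1)/2}$, yielding exactly \eqref{eq:1stmom-1}. The only subtle point in the whole argument is the covariance bookkeeping at step two; all other manipulations are routine once homogeneity is invoked, so I do not anticipate a real obstacle here beyond careful tracking of the normalizations of the GOE ensemble and of the sphere of radius $\sqrt N$.
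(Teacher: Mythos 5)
Your proposal is correct and follows essentially the same route as the proof of Lemmas 3.1--3.2 in \cite{A-BA-C} that the paper cites: Kac--Rice plus isotropy to collapse to a single point, the covariance computation identifying the tangential Euclidean Hessian block as a scaled GOE matrix independent of the field value and gradient, and the curvature/Euler-identity correction producing the $-\sqrt{\tfrac{1}{N-1}\tfrac{p}{p-1}}\,U I$ shift. The covariance bookkeeping and the final constant $\big(\tfrac{(p-1)(N-1)}{2\pi}\big)^{(N-1)/2}$ both check out; the only thing left implicit is the routine verification of the regularity hypotheses of \cite[Theorem 12.1.1]{RFG}, which is standard for this polynomial Gaussian field.
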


In Section \ref{sec:Conditional-distributions} we will define the
two pairs $\big(\mathcal{\mathbf{X}}_{N-1}^{\left(i\right)}(r)\big)_{i=1}^{2}$
and $\big(\mathcal{\mathbf{E}}_{N-1}^{\left(i\right)}(r,u_{1},u_{2})\big)_{i=1}^{2}$
of $N-1\times N-1$ matrices, for any $r\in(-1,1)$ and $u_{1},\,u_{2}\in\R$.
The random matrices $\mathcal{\mathbf{X}}_{N-1}^{\left(i\right)}(r)$
are correlated, and the upper-left $N-2\times N-2$ submatrix of each
of them has the same distribution as the corresponding submatrix of a GOE matrix. The deterministic
matrices $\mathcal{\mathbf{E}}_{N-1}^{\left(i\right)}(r,u_{1},u_{2})$
are zero everywhere except for
the one element in the right-bottom corner. Let $\left(U_{1}\left(r\right),U_{2}\left(r\right)\right)\sim N\left(0,\Sigma_{U}\left(r\right)\right)$
be two Gaussian variables with covariance matrix $\Sigma_{U}\left(r\right)$
given by (\ref{eq:26}), independent of the latter matrices. The following
representation for the expectation of $\left[\mbox{Crt}_{N}\left(B,I_{R}\right)\right]_{2}$
was proved in \cite{2nd}.
\begin{lem}
\cite[Lemma 4]{2nd}\label{lem:KR2ndmoment} For any $p\geq3$ and
any intervals $B\subset\R$ and $I_{R}\subset\left(-1,1\right)$,
\begin{align}
 \label{eq:2ndmom_refinedform-1}
 & \mathbb{E}\left\{ \left[{\rm Crt}_{N}\left(B,I_{R}\right)\right]_{2}\right\} =C_{N}\int_{I_{R}}dr\left(\mathcal{G}\left(r\right)\right)^{N}\mathcal{F}\left(r\right) \\
 & \times\mathbb{E}\left\{ \prod_{i=1,2}\left|\det\left(\mathbf{X}_{N-1}^{\left(i\right)}(r)-\sqrt{\frac{1}{N-1}\frac{p}{p-1}}U_{i}\left(r\right)I+\mathbf{E}_{N-1}^{\left(i\right)}(r,U_{1}\left(r\right),U_{2}\left(r\right))\right)\right|\cdot\mathbf{1}\Big\{ U_{1}\left(r\right),U_{2}\left(r\right)\in\sqrt{N}B\Big\}\right\} ,\nonumber
\end{align}
where
\begin{align}
C_{N} & =\omega_{N}\omega_{N-1}\left(\frac{\left(N-1\right)\left(p-1\right)}{2\pi}\right)^{N-1},\,\,\,\mathcal{G}\left(r\right)=\left(\frac{1-r^{2}}{1-r^{2p-2}}\right)^{\frac{1}{2}},\label{eq:cgf-1}\\
\mathcal{F}\left(r\right) & =\left(\mathcal{G}\left(r\right)\right)^{-3}\left(1-r^{2p-2}\right)^{-\frac{1}{2}}\left(1-\left(pr^{p}-\left(p-1\right)r^{p-2}\right)^{2}\right)^{-\frac{1}{2}}.\nonumber
\end{align}
\end{lem}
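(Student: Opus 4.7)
The plan is to apply the Kac-Rice formula for the second factorial moment to the spherical gradient field $\nabla H_N$ on the product manifold $\SN \times \SN$, restricted to the overlap slice $\{R(\bs,\bs')\in I_R\}$, and then use the rotational invariance of the law of $H_N$ to reduce the integral over pairs of points to a one-dimensional integral over $r=R(\bs,\bs')$. Concretely, an abstract Kac-Rice formula (in the spirit of \cite[Theorem 12.1.1]{RFG}, applied to pairs) expresses $\mathbb{E}\left\{[\mathrm{Crt}_N(B,I_R)]_2\right\}$ as
\[
\int_{\{(\bs,\bs')\in(\SN)^2:\,R(\bs,\bs')\in I_R\}}
p_{(\nabla H_N(\bs),\nabla H_N(\bs'))}(0,0)\cdot\mathbb{E}\bigl\{|\det\nabla^2 H_N(\bs)|\,|\det\nabla^2 H_N(\bs')|\,\mathbf{1}\{H_N(\bs),H_N(\bs')\in NB\}\bigm|\cdots\bigr\}\,\mathrm{vol},
\]
where the conditioning is on the two gradients vanishing. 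Because $O(N)$ acts transitively on the set of pairs with a given overlap $r$, the integrand depends on $(\bs,\bs')$ only through $r$, so the double integral collapses to $\int_{I_R} \mathcal{J}_N(r)\,dr$ for an explicit coarea Jacobian $\mathcal{J}_N(r)$.

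To compute $\mathcal{J}_N(r)$, I would pick a canonical representative pair $\bs=\sqrt N e_1,\ \bs'=\sqrt N(re_1+\sqrt{1-r^2}e_2)$ and write down the joint Gaussian law of the tuple $(H_N(\bs),H_N(\bs'),\nabla H_N(\bs),\nabla H_N(\bs'),\nabla^2 H_N(\bs),\nabla^2 H_N(\bs'))$ from the covariance $\mathbb{E}\{H_N(\bs)H_N(\bs')\} = N r^p$ and its tangential derivatives. The density of $(\nabla H_N(\bs),\nabla H_N(\bs'))$ at $(0,0)$ is a jointly Gaussian density on $2(N-1)$ variables whose covariance factorizes across tangent directions; evaluating this density together with the sphere-volume factor yields the scalar prefactor $C_N \mathcal{G}(r)^N \mathcal{F}(r)$, where $\mathcal{G}(r)^N$ collects the exponentially growing Gaussian normalization and $\mathcal{F}(r)$ the remaining polynomial Jacobian and ``double counting" corrections at $\bs=\bs'$ versus generic pairs. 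The shift $-\sqrt{p/((p-1)(N-1))}\,U_i(r)I$ inside the determinant is the standard Lagrange-multiplier correction converting the Euclidean to spherical Hessian: on the tangent space $\nabla^2_{\mathrm{sp}}H_N(\bs) = \nabla^2_{\mathrm{Euc}}H_N(\bs)|_{T_{\bs}\SN} - (H_N(\bs)/N)\cdot I$ times $p$-dependent normalization, and $U_i(r)=H_N(\bs_i)/\sqrt N$ has covariance $\Sigma_U(r)$ encoded in (\ref{eq:26}).

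Conditioning the two Hessians on the vanishing of both gradients and on the values $U_1,U_2$, I would then identify the covariance structure of the resulting Gaussian pair of $(N-1)\times(N-1)$ symmetric matrices. In the $(N-2)\times(N-2)$ block corresponding to the tangent directions orthogonal to the plane $\mathrm{span}(\bs,\bs')$, the covariance between entries of $\nabla^2 H_N(\bs)$ and $\nabla^2 H_N(\bs')$ is diagonalized by rotational invariance in those directions, each block is marginally GOE-distributed, and the joint law matches the one stipulated for $(\mathbf{X}_{N-1}^{(1)}(r),\mathbf{X}_{N-1}^{(2)}(r))$ restricted to its upper-left sub-block. The extra row/column corresponding to the in-plane tangent direction $e_2$ at $\bs$ (resp.\ the analogous direction at $\bs'$) receives a correction whose deterministic part, after conditioning on the gradients and on $(U_1,U_2)$, is concentrated in a single corner entry---precisely the matrix $\mathbf{E}_{N-1}^{(i)}(r,U_1,U_2)$. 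Putting these pieces inside the absolute determinant gives the integrand on the right-hand side of (\ref{eq:2ndmom_refinedform-1}).

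The main obstacle is this last step: carrying out the explicit block diagonalization of the joint covariance of the Hessians conditional on $\nabla H_N(\bs)=\nabla H_N(\bs')=0$, and showing that all the non-trivial $r$-dependence in the conditional law factors into (i) the scalar Lagrange shift through $U_i(r)$, (ii) the correlated GOE pair with identical marginal GOE law on the upper-left $(N-2)\times(N-2)$ sub-block, and (iii) a single rank-one corner correction. Once this algebraic identification is completed, the scalar prefactors $C_N$, $\mathcal{G}(r)^N$, and $\mathcal{F}(r)$ follow from tracking Gaussian normalizations, the coarea Jacobian of the map $(\bs,\bs')\mapsto r$ on $(\SN)^2$, and the contribution of the (at most one-dimensional) degeneracy at $r=\pm 1$.
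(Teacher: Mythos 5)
Your plan is the standard Kac--Rice-plus-rotational-invariance derivation, which is exactly how this lemma is proved in the cited reference \cite[Lemma 4]{2nd}; the present paper imports it without proof. The ``main obstacle'' you correctly identify---the explicit conditional law of the two Hessians given the vanishing gradients and the values $U_1,U_2$---is precisely the content of \cite[Lemma 13]{2nd}, whose output is recorded in Appendix \ref{sec:Conditional-distributions}, so your sketch matches the paper's route step for step.
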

For small overlaps $r$, we have the following.
\begin{cor}
\label{cor:smalloverlaps} For any $p\geq3$ and constant $T>0$,
there exists a function $B(r)=O(r^{3})$, which may depend on $T$, such that
the following holds. For any $a_{N},\,b_{N}\in[-T,T]$  and interval
$I_{N}=(r_{N},r_{N}')$ such that $r_{N},\,r_{N}'\to0$,
\begin{equation}
\limsup_{N\to\infty}\frac{\mathbb{E}\big\{\left[{\rm Crt}_{N}\left(\left(a_{N},b_{N}\right),I_{N}\right)\right]_{2}\big\}}{\big(\mathbb{E}\left\{ {\rm Crt}_{N}\left(\left(a_{N},b_{N}\right)\right)\right\} \big)^{2}}\leq \limsup_{N\to\infty}\sup_{u_1,u_2\in (a_{N},b_{N}) }\int_{I_N}\sqrt{\frac{N}{2\pi}}e^{-\frac{1}{2}Nr^{2}+NB(r)}\Delta_{N}(r,u_{1},u_{2})dr,\label{eq:smalloverlapsKR}
\end{equation}
where
\begin{equation}
  \Delta_{N}(r,u_{1},u_{2}):=\frac{\mathbb{E}\left\{ \prod_{i=1,2}\left|\det\left(\mathbf{X}_{N-1}^{\left(i\right)}(r)-\sqrt{\frac{N}{N-1}\frac{p}{p-1}}u_{i}I+\mathbf{E}_{N-1}^{\left(i\right)}(r,\sqrt{N}u_{1},\sqrt{N}u_{2})\right)\right|\right\} }{\prod_{i=1,2}\mathbb{E}\left\{ \left|\det\left(\mathbf{X}_{N-1}-\sqrt{\frac{N}{N-1}\frac{p}{p-1}}u_{i}I\right)\right|\right\} },\label{eq:DeltaN}
\end{equation}
and $\mathbf{X}_{N-1}$ is distributed as in Lemma \ref{lem:KR1stmoment}.
\end{cor}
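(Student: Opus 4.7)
The plan is to substitute the integral representations of Lemmas \ref{lem:KR1stmoment} and \ref{lem:KR2ndmoment} into the ratio $\E\{[{\rm Crt}_N(B,I_R)]_2\}/(\E\{{\rm Crt}_N(B)\})^2$ and analyse the prefactors and Gaussian densities in the small-overlap regime. I would rewrite the indicator $\mathbf{1}\{U\in\sqrt{N}B\}$ in Lemma \ref{lem:KR1stmoment} as an integral against the density of $U\sim N(0,1)$, and similarly rewrite $\mathbf{1}\{U_1(r),U_2(r)\in\sqrt{N}B\}$ in Lemma \ref{lem:KR2ndmoment} against the joint $N(0,\Sigma_U(r))$ density. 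The change of variables $U_i=\sqrt{N}\,u_i$ normalises both domains of integration to $B=(a_N,b_N)$ and turns the determinant arguments into exactly those appearing in the numerator of $\Delta_N(r,u_1,u_2)$, at the cost of Jacobian factors $\sqrt{N/(2\pi)}$ in the first moment and $N/(2\pi\sqrt{\det\Sigma_U(r)})$ in the second.

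Taking the ratio, the combinatorial prefactor collapses: $C_N/\bigl(\omega_N((p-1)(N-1)/(2\pi))^{(N-1)/2}\bigr)^2=\omega_{N-1}/\omega_N$, which by Stirling equals $(1+o(1))\sqrt{N/(2\pi)}$. The factor $N/(2\pi)$ from the two-dimensional change of variables cancels against the square of the one-dimensional one, so this accounts exactly for the $\sqrt{N/(2\pi)}$ displayed on the right-hand side of \eqref{eq:smalloverlapsKR}.

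Next I would expand the $r$-dependent factors. Since $p\geq3$,
\[\log\mathcal{G}(r)=\tfrac{1}{2}\bigl(\log(1-r^{2})-\log(1-r^{2p-2})\bigr)=-\tfrac{r^{2}}{2}+O(r^{4}),\]
so $(\mathcal{G}(r))^{N}=\exp(-Nr^{2}/2+NO(r^{4}))$, while both $\mathcal{F}(r)$ and $(\det\Sigma_U(r))^{-1/2}$ are continuous at $r=0$ with value $1$ and hence contribute $1+o(1)$. The key input is that from its explicit form \eqref{eq:26}, $\Sigma_U(r)=I+O(r^{p})$ entrywise as $r\to0$, so $\Sigma_U(r)^{-1}=I+O(r^{p})$ as well, whence
\[\tfrac{N}{2}\bigl|(u_{1},u_{2})\Sigma_U(r)^{-1}(u_{1},u_{2})^{T}-u_{1}^{2}-u_{2}^{2}\bigr|\leq C\,N r^{p}(u_{1}^{2}+u_{2}^{2}).\]
For $u_{1},u_{2}\in(a_N,b_N)\subset[-T,T]$ and $p\geq3$ this is at most $NB(r)$ for some $B(r)=O(r^{3})$ depending on $T$, as required.

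Finally, I would use the elementary inequality $\int f\,du_{1}du_{2}\big/\int g\,du_{1}du_{2}\leq\sup_{u}(f(u)/g(u))$, valid for positive $g$, applied to
\[g(u_{1},u_{2})=e^{-N(u_{1}^{2}+u_{2}^{2})/2}\prod_{i=1,2}\E\bigl\{\bigl|\det\bigl(\mathbf{X}_{N-1}-\sqrt{(N/(N-1))(p/(p-1))}\,u_{i}I\bigr)\bigr|\bigr\},\]
and $f(u_{1},u_{2})=e^{-(N/2)u^{T}\Sigma_U(r)^{-1}u}\E\{\prod_{i}|\det(\cdots)|\}$. The pointwise supremum factorises into the Gaussian correction $e^{NB(r)}$ and $\sup_{u_{1},u_{2}}\Delta_N(r,u_{1},u_{2})$, while the $r$-integration remains. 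Passing to $\limsup_{N\to\infty}$ absorbs the $(1+o(1))$ corrections from Stirling and from $\mathcal{F}(r)$, $(\det\Sigma_U(r))^{-1/2}$, yielding exactly \eqref{eq:smalloverlapsKR}. The main technical point I expect is reading off the small-$r$ expansion $\Sigma_U(r)=I+O(r^{p})$ from the covariance formula \eqref{eq:26}; once this is in hand, everything else is routine asymptotic bookkeeping.
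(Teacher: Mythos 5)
Your proposal follows essentially the same route as the paper's (terse) proof: substitute the Kac--Rice formulas, rescale $U_i=\sqrt{N}u_i$, collapse the prefactors to $\omega_{N-1}/\omega_N\sim\sqrt{N/(2\pi)}$, expand $\mathcal{G}(r)^N$, $\mathcal{F}(r)$ and $\Sigma_U(r)=I+O(r^p)$, and compare the two Gaussian densities; all of these steps are carried out correctly, and the key observation that the off-diagonal entry of $\Sigma_U(r)$ is $O(r^p)=O(r^3)$ is exactly the paper's.

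The one slip is at the very end: applying $\int f/\int g\leq\sup_u(f/g)$ at each \emph{fixed} $r$ and then integrating in $r$ produces the bound $\int_{I_N}(\cdots)\sup_{u_1,u_2}\Delta_N(r,u_1,u_2)\,dr$, i.e.\ an integral of a supremum, whereas \eqref{eq:smalloverlapsKR} asserts the (smaller) supremum of an integral. Since $\int\sup\geq\sup\int$, your bound does not formally imply the stated one. The repair is immediate: first use Fubini to pull the $u$-integration outside the $r$-integration, writing the second-moment numerator as $\int_{B^2}\prod_i\Phi_N(u_i)e^{-N|u|^2/2}\bigl[\int_{I_N}e^{-Nr^2/2+NB(r)}\Delta_N(r,u_1,u_2)\,dr\bigr]du_1du_2$ (up to the $1+o(1)$ factors), and only then apply $\int f/\int g\leq\sup(f/g)$ in the $u$ variables; this yields the $\sup_{u_1,u_2}\int_{I_N}$ form as stated.
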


\begin{proof}
The corollary follows from Lemmas \ref{lem:KR1stmoment} and \ref{lem:KR2ndmoment}
since as $r\to0$, $\mathcal{F}\left(r\right)=1+O(r)$, $\mathcal{G}\left(r\right)=e^{-\frac{1}{2}r^{2}+O(r^{4})}$
and uniformly in $r\in I_{N}$ and $u_{1},\,u_{2}\in(a_{N},b_{N})$,
\begin{align*}
\varphi_{\Sigma_{U}\left(r\right)}(u_{1},u_{2}) & :=\frac{1}{2\pi}\left(\det\left(\Sigma_{U}\left(r\right)\right)\right)^{-1/2}\exp\left\{ -\frac{1}{2}\left(u_{1},u_{2}\right)\left(\Sigma_{U}\left(r\right)\right)^{-1}\left(u_{1},u_{2}\right)^{T}\right\} \\
 & =\left(1+O\left(r^{p}\right)\right)\frac{1}{2\pi}\exp\left\{ -\frac{1}{2}\left(u_{1}^{2}+u_{2}^{2}\right)+\left(u_{1}+u_{2}\right)^{2}O(r^{p})\right\} ,
\end{align*}
and since
\[
\lim_{N\to\infty}\frac{C_{N}}{\omega_{N}^{2}\left(\frac{p-1}{2\pi}(N-1)\right)^{N-1}}/\sqrt{\frac{N}{2\pi}}=\lim_{N\to\infty}\frac{\omega_{N-1}}{\omega_{N}}/\sqrt{\frac{N}{2\pi}}=1.
\]
\end{proof}

\subsection{\label{subsec:large_overlaps}Large overlaps and energies}

The following lemma follows from the asymptotics at exponential scale
derived in \cite{A-BA-C} and \cite{2nd}.
\begin{lem}
\label{lem:large}For any $p\geq3$ and $u\in(-E_\infty,\,\infty)$, there
exist some sequences of real numbers $a_{N},\,b_{N}\to\min\{u,0\}$
and $\rho_{N}\to0$, such that
\[
\lim_{N\to\infty}\frac{\mathbb{E}\big\{\left({\rm Crt}_{N}\left(\left(-\infty,u\right)\right)\right)^{2}\big\}}{\big(\mathbb{E}\left\{ {\rm Crt}_{N}\left(\left(-\infty,u\right)\right)\right\} \big)^{2}}/\frac{\mathbb{E}\big\{\left[{\rm Crt}_{N}\left(\left(a_{N},b_{N}\right),(-\rho_{N},\rho_{N})\right)\right]_{2}\big\}}{\big(\mathbb{E}\left\{ {\rm Crt}_{N}\left(\left(a_{N},b_{N}\right)\right)\right\} \big)^{2}}=1.
\]
\end{lem}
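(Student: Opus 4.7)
The strategy is to choose the sequences so that both moments are dominated (up to multiplicative $(1+o(1))$) by their restricted counterparts. Setting $\bar u := \min\{u,0\}$, it suffices to prove
\begin{align*}
\frac{\E\mbox{Crt}_N((-\infty,u))}{\E\mbox{Crt}_N((a_N,b_N))} \to 1 \quad \text{and} \quad \frac{\E(\mbox{Crt}_N((-\infty,u)))^2}{\E[\mbox{Crt}_N((a_N,b_N),(-\rho_N,\rho_N))]_2} \to 1,
\end{align*}
as the ratio identity of the lemma follows immediately by combining these.

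For the first convergence, I would apply a Laplace-type analysis to the Kac-Rice representation (\ref{eq:1stmom-1}). After rescaling $U=\sqrt{N}x$, the integrand becomes $e^{Nh_N(x)}\indic\{x\le u\}$ times sub-exponential factors, where $h_N(x)\to h(x):=g(x)-x^2/2$ and $g(x):=\lim_N N^{-1}\log\E|\det(\mathbf{X}_{N-1}-c\sqrt{N}xI)|$ is determined by the semicircle law. From the asymptotics (\ref{eq:complexity}), one reads off that $\sup_{x\le u}h(x)$ is uniquely attained at $x=\bar u$: at the boundary $x=u$ when $u\in(-E_{\infty},0)$, since $\Theta_p$ is strictly increasing there, and at the interior point $x=0$ when $u\ge 0$, since $\Theta_p$ is constant on $[0,\infty)$ while $h(x)=h(0)-\tfrac{p-2}{4(p-1)}x^2$ is strictly concave near the origin. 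A standard Laplace argument then ensures that if $a_N,b_N\to\bar u$ with $b_N\le u$ and the window-width $b_N-a_N$ shrinks sufficiently slowly (e.g.\ $\asymp 1/\log N$), then the contribution from outside the window is exponentially negligible, giving the first convergence.

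For the second convergence, I would carry out a parallel Laplace analysis on (\ref{eq:2ndmom_refinedform-1}), now regarded as an integral in the triple $(r,u_1,u_2)$. Two inputs are crucial. First, the small-$r$ expansion $\mathcal{G}(r)=1-r^2/2+O(r^4)$ from (\ref{eq:cgf-1}) furnishes a Gaussian factor $\mathcal{G}(r)^N\approx e^{-Nr^2/2}$ that localizes $r$ to an $O(1/\sqrt{N})$ window around $0$. Second, the exponential moment-matching $N^{-1}\log(\E(\mbox{Crt}_N)^2/(\E\mbox{Crt}_N)^2)\to 0$ proved in \cite{2nd} for $u\in(-\Es,\infty)$ forces every contribution from overlaps $|r|$ bounded away from $0$ (and from $\pm 1$, where the diagonal reduces to the first moment and is therefore subdominant to its square) to be exponentially suppressed. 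Combined with the first-moment concentration applied separately to $u_1$ and $u_2$, this localizes the second-moment integrand at $(r,u_1,u_2)\approx(0,\bar u,\bar u)$, so that a slowly shrinking $\rho_N$ (e.g.\ $\asymp 1/\log N$) yields the second convergence.

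The main obstacle is the identification in the second step of $r=0$ as the \emph{unique} dominant overlap, i.e.\ ruling out a secondary maximum of the integrand's exponential rate at some $r\ne 0$ with $|r|<1$. This however is exactly the content of the exponential moment-matching result of \cite{2nd}, so the lemma reduces to extracting this dominance from that analysis and combining it with routine Laplace arguments applied to (\ref{eq:1stmom-1}) and (\ref{eq:2ndmom_refinedform-1}) using the asymptotics (\ref{eq:complexity}).
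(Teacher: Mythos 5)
Your overall decomposition is the same as the paper's: reduce the lemma to (i) localization of the first moment to energies near $\min\{u,0\}$ and (ii) localization of the second moment to energies near $\min\{u,0\}$ and overlaps near $0$. Step (i) is fine (the paper simply cites Theorem~2.8 of \cite{A-BA-C}, which is the Laplace argument you sketch). The problem is step (ii), where you write that ruling out a secondary maximum of the overlap rate function at some $r\neq 0$ ``is exactly the content of the exponential moment-matching result of \cite{2nd}.'' It is not. Writing $\E\{[{\rm Crt}_N]_2\}\approx\int e^{N\psi(r)}\,dr$, the exponential-scale matching only says $\sup_{r}\psi(r)=\psi(0)=2\Theta_p(u)$; it gives the inequality $\psi(r)\le\psi(0)$ but not the \emph{strict} inequality $\sup_{|r|\ge\delta}\psi(r)<\psi(0)$ that you need to discard overlaps bounded away from $0$ at the $O(1)$ (rather than exponential) level. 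A degenerate secondary maximizer $r_0\neq0$ with $\psi(r_0)=\psi(0)$ is entirely consistent with exponential matching but would destroy the conclusion of the lemma. So the key point of the proof is missing, not merely deferred.

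What actually closes this gap in the paper is an extension of Part~(i) of Lemma~6 of \cite{2nd} (used in Lemma~20 there, which covers $u\in(-\Es,-E_\infty)$) from energy sets $B\subset(-\infty,-E_\infty)$ to $B\subset(-E_\infty,E_\infty)$ --- precisely the new regime this paper addresses. Concretely, one must show that for every $r\in(-1,1)$ the function
\[
g_{r}(u_{1},u_{2})=-\tfrac{1}{2}(u_{1},u_{2})\big(\Sigma_{U}(r)\big)^{-1}(u_{1},u_{2})^{T}+\Omega\Big(\sqrt{\tfrac{p}{p-1}}u_{1}\Big)+\Omega\Big(\sqrt{\tfrac{p}{p-1}}u_{2}\Big)
\]
is strictly concave on $(-E_\infty,E_\infty)^2$. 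Since $\Omega$ is exactly quadratic on $[-2,2]$ with $\frac{d^2}{dx^2}\Omega(\sqrt{p/(p-1)}\,x)=\frac{p}{2(p-1)}$, this amounts to the eigenvalue bound $\Sigma_{U,11}(r)\pm\Sigma_{U,12}(r)<2(p-1)/p$, which the paper verifies by an explicit critical-point analysis of the polynomial $h(r)$ in \eqref{eq:hr}. None of this appears in your proposal, and it cannot be extracted from the exponential moment matching alone; it is the substantive computation behind the lemma. Your sketch would become a proof only after supplying this strict concavity (or some other proof of strict dominance of $r=0$, $u_1=u_2=\min\{u,0\}$ in the joint rate function) for energies in $(-E_\infty,E_\infty)$.
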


\begin{proof}
Let $\rho\in\left(0,1\right)$ and $\epsilon>0$. To prove the lemma
it is enough to show that if $u<0$ then
\begin{align}
\label{eq:ratio1}\lim_{N\to\infty}\mathbb{E}\left\{ \left[{\rm Crt}_{N}\left(\left(u-\epsilon,u\right),(-\rho,\rho)\right)\right]_{2}\right\} /\mathbb{E}\left\{ \left({\rm Crt}_{N}\left(\left(-\infty,u\right)\right)\right)^{2}\right\}  & =1,\\
\label{eq:ratio2}\lim_{N\to\infty}\mathbb{E}\left\{ {\rm Crt}_{N}\left(\left(u-\epsilon,u\right)\right)\right\} /\mathbb{E}\left\{ {\rm Crt}_{N}\left(\left(-\infty,u\right)\right)\right\}  & =1,
\end{align}
and if $u\geq0$ then
\begin{align}
\label{eq:ratio3}\lim_{N\to\infty}\mathbb{E}\left\{ \left[{\rm Crt}_{N}\left(\left(-\epsilon,\epsilon\right),(-\rho,\rho)\right)\right]_{2}\right\} /\mathbb{E}\left\{ \left({\rm Crt}_{N}\left(\left(-\infty,u\right)\right)\right)^{2}\right\}  & =1,\\
\label{eq:ratio4}\lim_{N\to\infty}\mathbb{E}\left\{ {\rm Crt}_{N}\left(\left(-\epsilon,\epsilon\right)\right)\right\} /\mathbb{E}\left\{ {\rm Crt}_{N}\left(\left(-\infty,u\right)\right)\right\}  & =1.
\end{align}

The equalities  \eqref{eq:ratio2} and \eqref{eq:ratio4} follow from Theorem 2.8 of \cite{A-BA-C}.
For $u\in(-\Es,\,-E_{\infty})$, \eqref{eq:ratio1} was proved in Lemma 20 of \cite{2nd}.
The proof of \eqref{eq:ratio1} and \eqref{eq:ratio3}
(for $u>-E_\infty$)
follows by the same argument
if one is able to extend Part (i) of Lemma 6 of \cite{2nd},
used in the proof of Lemma 20 there, to subsets $B\subset (-E_\infty,E_\infty) $ instead of
subsets $B\subset (-\infty,-E_{\infty})$ as in the original statement of the lemma.
The proof of the
latter generalizes easily if one shows that for any $r\in(-1,1)$,
$g_{r}(u_{1},u_{2})$ is strictly concave on $(-E_\infty,E_\infty)^2$  where
\begin{align*}
g_{r}(u_{1},u_{2}) & =-\frac{1}{2}(u_{1},u_{2})\big(\Sigma_{U}(r)\big)^{-1}\left(\begin{array}{c}
u_{1}\\
u_{2}
\end{array}\right)
  +\Omega\Big(\sqrt{\frac{p}{p-1}}u_{1}\Big)+\Omega\Big(\sqrt{\frac{p}{p-1}}u_{2}\Big)
\end{align*}
and
\[
\Omega(x)=\begin{cases}
\frac{x^{2}}{4}-\frac{1}{2} & \text{if }|x|\leq2\\
\frac{x^{2}}{4}-\frac{1}{2}-\frac{|x|}{2}\sqrt{\frac{x^{2}}{4}-1}+\log\bigg(\sqrt{\frac{x^{2}}{4}-1}+\frac{|x|}{2}\bigg) & \text{if }|x|>2.
\end{cases}
\]
The concavity of $g_{r}(u_{1},u_{2})$ was proved in \cite{2nd} for $(u_{1},u_2)\in (-\infty,-E_\infty)^2$, we complete the current
proof by proving concavity for  $(u_{1},u_2)\in (-E_\infty,E_\infty)^2$.
For  such $(u_{1},u_2)\in (-E_\infty,E_\infty)^2$, the Hessian of $g_{r}(u_{1},u_{2})$ equals
\[
-\big(\Sigma_{U}(r)\big)^{-1}
+\left(\begin{array}{cc}
	\frac{1}{2}\frac{p}{p-1}  & 0 \\
	0 & \frac{1}{2}\frac{p}{p-1}
\end{array}\right),
\]
since for $x\in (-E_\infty,E_\infty)$,
\[
\frac{d^2}{dx^2}\Big(\Omega\Big(\sqrt{\frac{p}{p-1}}x\Big)\Big)=\frac{1}{2}\frac{p}{p-1}.
\]

%If $|u_{i}|>2\sqrt{\frac{p-1}{p}}$, then $$\Omega''\Big(\sqrt{\frac{p}{p-1}}u_{i}\Big)<0.$$ If $|u_{i}|\leq2\sqrt{\frac{p-1}{p}}$, then
%\[
%\Omega''\Big(\sqrt{\frac{p}{p-1}}u_{i}\Big)=\frac{1}{2}.
%\]

Hence, if we show that the eigenvalues\footnote{The eigenvalues of $\Sigma_{U}(r)$ are positive for any $r\in(-1,1)$,
see Remark 31 in \cite{2nd}.} of $\Sigma_{U}(r)$ are less than $2(p-1)/p$, it will follow that
$g_{r}(u_{1},u_{2})$ is strictly concave on $(-E_\infty,E_\infty)^2$.

The eigenvalues of $\Sigma_{U}(r)$ are equal to $\Sigma_{U,11}(r)\pm\Sigma_{U,12}(r)$.
One can verify that
\[
\Sigma_{U,11}(r)\pm\Sigma_{U,12}(r)=\frac{1-r^{2p-2}\pm(p-1)r^{p-2}(1-r^{2})}{1\mp(pr^{p}-(p-1)r^{p-2})},
\]
which are less than $2(p-1)/p$ if
\begin{equation}
  h(r):=p-2+pr^{2p-2}\pm(p-2)(p-1)r^{p-2}\mp(p-1)pr^{p}>0.\label{eq:hr}
\end{equation}
Since $h(0)>0$ and $h(\pm1)\geq0$, to prove that $h(r)>0$ for $r\in(-1,1)$
it will be enough to show the same for any critical point $r$ of
$h(r)$. Since
\[
h'(r)=\frac{1}{r}(2p-2)\left[pr^{2p-2}\pm\frac{1}{2}(p-2)^{2}r^{p-2}\mp\frac{1}{2}p^{2}r^{p}\right],
\]
at a critical point
\[
pr^{2p-2}=\mp\frac{1}{2}(p-2)^{2}r^{p-2}\pm\frac{1}{2}p^{2}r^{p}
\]
and
\[
h(r)=p-2\pm\frac{1}{2}p(p-2)r^{p-2}(1-r^{2}).
\]
Since $|r^{p-2}(1-r^{2})|<2/p$ for all $r\in(-1,1)$, we conclude
(\ref{eq:hr}) and that $g_{r}(u_{1},u_{2})$ is strictly concave on $(-E_\infty,E_\infty)^2$.
\end{proof}

\subsection{\label{subsec:intermediate_overlaps}Intermediate overlaps}

Lemma \ref{lem:large} allows us to restrict to small overlap values
$r\in(-\rho_{N},\rho_{N})$, but with no specific bound on the rate at which
$\rho_{N}\to0$.
In this section we prove that for appropriate $C>0$, overlaps $r$
with $|r|\geq C\sqrt{\log N/N}$ are negligible as $N\to\infty$.
\begin{lem}
\label{lem:Intermediate}For any $p\geq3$ and constant $T\in (0,E_\infty)$, the
following holds, with the function $B(r)=O(r^{3})$
from Corollary \ref{cor:smalloverlaps}. For any sequence $\rho_{N}\to0$,
 setting
\[
I_{N}:=(-\rho_{N},\rho_{N})\setminus(-C\sqrt{\log N/N},C\sqrt{\log N/N}),
\]
for some $C>\sqrt{2}$, we have that
\begin{equation}
\limsup_{N\to\infty}\sup_{u_1,u_2\in [-T,T] }\int_{I_N}\sqrt{\frac{N}{2\pi}}e^{-\frac{1}{2}Nr^{2}+NB(r)}\Delta_{N}(r,u_{1},u_{2})dr=0.
\label{eq:large_r}
\end{equation}
\end{lem}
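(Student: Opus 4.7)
I would prove the lemma by splitting the integrand into the Gaussian-like factor $\sqrt{N/(2\pi)}\exp(-Nr^{2}/2+NB(r))$ and the determinantal ratio $\Delta_{N}(r,u_{1},u_{2})$, bounding each separately, and then combining. Since $B(r)=O(r^{3})$ there is a constant $K>0$ with $|B(r)|\le K|r|^{3}$ near zero; on $I_{N}\subset(-\rho_{N},\rho_{N})$ and for $N$ so large that $K\rho_{N}\le 1/4$, one has $NB(r)\le \frac{1}{4}Nr^{2}$ and hence $e^{-Nr^{2}/2+NB(r)}\le e^{-Nr^{2}/4}$ throughout $I_{N}$. A direct Gaussian tail estimate then yields
\[
\sqrt{\frac{N}{2\pi}}\int_{|r|\ge C\sqrt{\log N/N}} e^{-Nr^{2}/4}\,dr = O\left(\frac{N^{-C^{2}/4}}{\sqrt{\log N}}\right),
\]
which, for $C>\sqrt{2}$, decays strictly faster than $N^{-1/2}$.

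The remaining task is a uniform polynomial bound of the form $\Delta_{N}(r,u_{1},u_{2})\le N^{\alpha}$ with a fixed exponent $\alpha$, valid for $r\in I_{N}$ and $u_{1},u_{2}\in[-T,T]$. I would apply Cauchy-Schwarz to the numerator of \eqref{eq:DeltaN} to decouple the two correlated matrices, giving
\[
\mathbb{E}\prod_{i=1,2}|\det M_{i}|\le \prod_{i=1,2}\bigl(\mathbb{E}|\det M_{i}|^{2}\bigr)^{1/2},
\]
where $M_{i}:=\mathbf{X}_{N-1}^{(i)}(r)-\sqrt{\frac{N}{N-1}\frac{p}{p-1}}u_{i}I+\mathbf{E}_{N-1}^{(i)}$. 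Since each $\mathbf{E}_{N-1}^{(i)}$ is rank one and supported at the bottom-right corner, a Schur-complement expansion with respect to the upper-left $(N-2)\times(N-2)$ block of $\mathbf{X}_{N-1}^{(i)}(r)$ (which is exactly GOE-distributed) reduces $|\det M_{i}|^{2}$ to a squared GOE determinant of dimension $N-2$ multiplied by an explicit scalar. For shifts in the bulk, i.e.\ $|u_{i}|\le T<E_{\infty}$, known sharp first and second moment asymptotics for GOE characteristic polynomials bound $\mathbb{E}|\det M_{i}|^{2}$ by a polynomial factor in $N$ times the square of $\mathbb{E}|\det(\mathbf{X}_{N-1}-\sqrt{\frac{N}{N-1}\frac{p}{p-1}}u_{i}I)|$, yielding the desired polynomial bound on $\Delta_{N}$. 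Combined with the Gaussian tail, the full integral is $O(N^{\alpha-C^{2}/4}/\sqrt{\log N})$, which vanishes once $C>2\sqrt{\alpha}$; since the statement only asks for the existence of some $C>\sqrt{2}$, one may enlarge $C$ to accommodate the exponent $\alpha$ obtained.

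The main obstacle is controlling $\Delta_{N}$ when the shifts lie inside the semicircle support. There $\mathbb{E}|\det(\mathbf{X}_{N-1}-\sqrt{\frac{N}{N-1}\frac{p}{p-1}}u_{i}I)|$ is not exponentially large and $\log|\det|$ fluctuates on scale $\sqrt{\log N}$, so the ratio of the second moment to the square of the first moment grows polynomially in $N$. This phenomenon does not appear in the analogous argument of \cite{2nd}, where the shifts sat outside the bulk and the expected determinant was exponentially large; the new technical step here is to track the polynomial exponent accurately through the Schur-complement factorization and correlated-matrix Cauchy-Schwarz, and verify that it sits below the Gaussian-tail threshold $C^{2}/4$.
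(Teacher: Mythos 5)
Your proposal follows essentially the same route as the paper: Cauchy--Schwarz to decouple the two correlated determinants, reduction of each second moment to moments of shifted GOE characteristic polynomials with the shift in the bulk, a resulting polynomial bound on $\Delta_N$, and a Gaussian tail estimate over $|r|\geq C\sqrt{\log N/N}$. Two remarks on where your sketch is looser than the paper's argument. First, the reduction of $\mathbb{E}|\det M_i|^2$ is not quite ``a squared GOE determinant of dimension $N-2$ times an explicit scalar'': the Schur complement $Q-\bar u+\bar m-Z^T(\mathbf{G}_{N-2}-\bar u I)^{-1}Z$ is random, unbounded, and correlated with $\det(\mathbf{G}_{N-2}-\bar uI)$, so the expectation does not factor. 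The paper instead rotates the edge vector $Z$ onto one coordinate and expands the determinant by cofactors, which yields $\det M_i=(Q-\bar u+\bar m)\det(\tilde{\mathbf{G}}_{N-2}-\bar uI)-\|Z\|^2\det(\tilde{\mathbf{G}}_{N-3}-\bar uI)$ with \emph{independent} factors; one then needs the first- and $2k$-th-moment asymptotics of bulk-shifted GOE determinants (from \cite{A-BA-C} and Brezin--Hikami) for dimensions $N-2$ and $N-3$ against the first moment at dimension $N-1$, giving $\Delta_N\leq cN$. You should make this step explicit, since without it the ``explicit scalar'' claim does not stand as written. Second, your bound $NB(r)\leq Nr^2/4$ costs a factor of $2$ in the Gaussian exponent ($N^{-C^2/4}$ rather than $N^{-C^2/2+o(1)}$), so you need $C>2\sqrt{\alpha}$ rather than the stated threshold $C>\sqrt 2$; since $|B(r)|\leq K\rho_N r^2$ with $\rho_N\to0$ you could just as easily keep $e^{-Nr^2(1/2-o(1))}$, and with $\alpha=1$ from the sharp moment asymptotics this recovers the threshold $C>\sqrt2$. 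Enlarging $C$ is harmless for the application (Lemma \ref{lem:small_r} holds for every $C>0$), so your version still suffices for Theorem \ref{thm:main}.
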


\begin{proof}
Denote by $\mathbf{M}^{(i)}=\mathbf{M}_{N-1}^{(i)}(r,u_{1},u_{2})$
and $\mathbf{W}^{(i)}=\mathbf{W}_{N-1}(u_{i})$ the matrices in the
numerator and denominator of (\ref{eq:DeltaN}), respectively. By
the Cauchy-Schwarz inequality,
\[
\Delta_{N}(r,u_{1},u_{2})\leq\frac{\sqrt{\mathbb{E}\left\{ \det\left(\mathbf{M}^{(1)}\right)^{2}\right\} \mathbb{E}\left\{ \det\left(\mathbf{M}^{(2)}\right)^{2}\right\} }}{\mathbb{E}\left\{ \left|\det\left(\mathbf{W}^{(1)}\right)\right|\right\} \mathbb{E}\left\{ \left|\det\left(\mathbf{W}^{(2)}\right)\right|\right\} }.
\]
Hence, to prove the lemma it will be enough to show that for large
$N$, uniformly in $r\in I_{N}$ and $u_{1},\,u_{2}\in[-T,T]$,
\begin{equation}
\frac{\mathbb{E}\left\{ \det\left(\mathbf{M}^{(i)}\right)^{2}\right\} }{\mathbb{E}\left\{ \left|\det\left(\mathbf{W}^{(i)}\right)\right|\right\} ^{2}}\leq cN,\label{eq:det2moment}
\end{equation}
 for some constant $c$, since this would imply that
\begin{align*}
 & \int_{I_{N}}\sqrt{\frac{N}{2\pi}}e^{-\frac{1}{2}Nr^{2}+NB(r)}\Delta_{N}(r,u_{1},u_{2})dr\\
 & \leq cNe^{o_{N}(1)}\int_{\sqrt{N}I_{N}}\frac{1}{\sqrt{2\pi}}e^{-\frac{1}{2}r^{2}}dr\leq\sqrt{\frac{2}{\pi\log N}}\frac{c}{C}e^{(1-\frac{1}{2}C^{2})\log N+o_{N}(1)},
\end{align*}
where we used the fact that for the standard Gaussian density $\varphi(x)$,
for $x\geq0$, $\int_{x}^{\infty}\varphi(t)dt<\varphi(x)/x$. We prove
a slightly more general result than (\ref{eq:det2moment}) in the
lemma below, which will be also used in Section \ref{subsec:small_overlaps}.
\end{proof}
\begin{lem}
\label{lem:detMomentBd} Let
$\mathbf{M}^{(i)}=\mathbf{M}_{N-1}^{(i)}(r,u_{1},u_{2})$
and $\mathbf{W}^{(i)}=\mathbf{W}_{N-1}(u_{i})$ denote the matrices in the
numerator and denominator of (\ref{eq:DeltaN}), respectively. Then
for any even integer $k>1$ and $T\in (0,E_\infty)$, there exists a constant $c=c(k,T)>0$
such that for large $N$,
\[
\frac{\mathbb{E}\left\{ \left|\det\mathbf{M}^{(i)}\right|^{k}\right\} }{\mathbb{E}\left\{ \left|\det\mathbf{W}^{(i)}\right|\right\} ^{k}}\leq cN^{\frac{k(k-1)}{2}},
\]
uniformly in $r\in(-1,1)$ and $u_{1},\,u_{2}\in[-T,T]$.
\end{lem}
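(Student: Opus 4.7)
The structural observation driving the proof is that the upper-left $(N-2)\times(N-2)$ block of $\mathbf{M}^{(i)}$ has the same distribution as the corresponding block of $\mathbf{W}^{(i)}$, namely a GOE submatrix of size $N-2$ shifted by $-\sqrt{\frac{N}{N-1}\frac{p}{p-1}}u_i I$, while the perturbation $\mathbf{E}_{N-1}^{(i)}$ is confined to the bottom-right entry. Thus $\mathbf{M}^{(i)}$ and $\mathbf{W}^{(i)}$ can be coupled to share a common top-left block $\mathbf{A}^{(i)}$ (itself a shifted GOE) and to differ only in their last row and column. Writing
\[
\mathbf{M}^{(i)} = \begin{pmatrix} \mathbf{A}^{(i)} & \mathbf{b}^{(i)} \\ (\mathbf{b}^{(i)})^T & c^{(i)} \end{pmatrix},
\]
with $\mathbf{b}^{(i)}\in \R^{N-2}$ and $c^{(i)}\in\R$ Gaussian (the latter shifted by the $\mathbf{E}$-contribution), whose variances and mean are uniformly bounded in $r\in(-1,1)$ and $u_1,u_2\in[-T,T]$, the Schur/Laplace identity gives
\[
\det \mathbf{M}^{(i)} = c^{(i)}\det \mathbf{A}^{(i)} - (\mathbf{b}^{(i)})^T\,\mathrm{adj}(\mathbf{A}^{(i)})\,\mathbf{b}^{(i)},
\]
where the entries of $\mathrm{adj}(\mathbf{A}^{(i)})$ are, up to sign, the $(N-3)\times(N-3)$ minors of $\mathbf{A}^{(i)}$.

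Since $k$ is even, $|\det \mathbf{M}^{(i)}|^k=(\det \mathbf{M}^{(i)})^k$; expanding this identity and applying H\"older's inequality decouples the expectation over $\mathbf{A}^{(i)}$ from the Gaussian expectation over the essentially independent last row and column. The latter expectation produces only polynomial-in-$N$ factors, controlled by the dimension $N-2$ of $\mathbf{b}^{(i)}$ and the bounded variances of its entries, so after a further application of H\"older, the task reduces to bounding pure $k$-th moments of the form $\E\{|\det\mathbf{A}^{(i)}|^k\}$ and $\E\{|M_{jl}(\mathbf{A}^{(i)})|^k\}$, where $M_{jl}$ is an $(N-3)\times(N-3)$ minor. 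In either case this is the $k$-th moment of the absolute determinant of a shifted GOE with shift $\sqrt{\frac{p}{p-1}}u_i$ lying strictly inside the semicircle support $(-2,2)$, since $|u_i|<T<E_\infty$.

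The technical core is then the moment-ratio bound
\[
\E\{|\det\mathbf{A}^{(i)}|^m\}\, \le\, C_m\, \bigl(\E\{|\det\mathbf{A}^{(i)}|\}\bigr)^m\, N^{m(m-1)/2},
\]
for fixed $m$, uniformly in $u_i\in[-T,T]$. It follows from the classical fact that, for a GOE matrix with shift in the bulk, $\log|\det|$ has approximately Gaussian fluctuations of variance of order $\log N$ around its mean, so its $m$-th exponential moment is controlled by $N^{m(m-1)/2}$; an identical bound holds for the $(N-3)$-dimensional minors. Comparing $\E\{|\det\mathbf{A}^{(i)}|\}$ with $\E\{|\det\mathbf{W}^{(i)}|\}$ (they differ in only one dimension of a bulk-shifted GOE and concentrate at the same exponential rate, so their ratio is at most polynomial in $N$), the pieces assemble to give the claimed $c\,N^{k(k-1)/2}$. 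The main obstacle is establishing the bulk log-determinant moment bound uniformly in $u_i\in[-T,T]$, in particular maintaining control as $T$ approaches $E_\infty$, where the shift approaches the spectral edge and the Gaussian fluctuation regime becomes delicate; once this bound is in hand, the rest of the argument is routine H\"older and Gaussian-moment bookkeeping.
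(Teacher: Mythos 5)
Your skeleton matches the paper's: block-decompose $\mathbf{M}^{(i)}$, isolate the last row and column, and reduce to a moment-ratio bound of order $N^{m(m-1)/2}$ for absolute determinants of bulk-shifted GOE matrices, plus a one-dimension-shift comparison with $\mathbf{W}^{(i)}$. But two steps do not close as written. First, the adjugate route: in $\det \mathbf{M}^{(i)} = c^{(i)}\det \mathbf{A}^{(i)} - (\mathbf{b}^{(i)})^T\mathrm{adj}(\mathbf{A}^{(i)})\mathbf{b}^{(i)}$, the off-diagonal entries of $\mathrm{adj}(\mathbf{A}^{(i)})$ are (up to sign) determinants of \emph{non-symmetric} $(N-3)\times(N-3)$ submatrices obtained by deleting row $j$ and column $l$ with $j\neq l$; these are not shifted GOE matrices, so your claim that ``in either case this is the $k$-th moment of the absolute determinant of a shifted GOE'' fails for every cross term. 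Moreover, even granting moment bounds on individual minors, the quadratic form has $(N-2)^2$ terms, and the H\"older bookkeeping you describe introduces extra polynomial factors that overwhelm the target exponent $N^{k(k-1)/2}$ for the small $k$ (namely $k=4$) that the main proof actually uses. The paper avoids both problems with one move you are missing: since $Z^{(i)}(r)$ has i.i.d.\ centered Gaussian entries independent of the GOE block, one conjugates by an orthogonal matrix measurable with respect to $Z^{(i)}(r)$ that rotates it onto $(0,\dots,0,\|Z^{(i)}(r)\|)$; orthogonal invariance of the GOE block then collapses the quadratic form to $\|Z^{(i)}(r)\|^2$ times a single \emph{principal} $(N-3)\times(N-3)$ minor, which is again a shifted GOE determinant, and $\E\{\|Z^{(i)}(r)\|^{2k}\}=O(1)$.

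Second, your ``technical core'' $\E\{|\det\mathbf{A}|^m\} \le C_m(\E\{|\det\mathbf{A}|\})^m N^{m(m-1)/2}$ is justified by appeal to the CLT for the log-determinant with variance of order $\log N$. The heuristic identifies the correct exponent, but a distributional limit does not control the exponential moments $\E\{e^{m\log|\det|}\}$, so this is not a proof. The paper instead uses exact/asymptotic formulas for integer moments of characteristic polynomials of the GOE: the first absolute moment \eqref{eq-1} (from Lemma 3.3 of Auffinger--Ben Arous--\v{C}ern\'y) and the $2k$-th moment \eqref{eq-2} (from Brezin--Hikami), both valid uniformly on compact subsets of the bulk. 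Your worry about maintaining control as $T$ approaches $E_\infty$ is moot: $T<E_\infty$ is fixed, so the rescaled shift stays in a fixed compact subset of $(-2,2)$. With these two repairs your argument becomes essentially the paper's proof.
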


\begin{proof}
Using (\ref{eq:ghat-1}),
\begin{align*}
\mathbf{M}^{(i)} & =\left(\begin{array}{cc}
\mathbf{G}_{N-2}^{\left(i\right)}\left(r\right)-\bar{u}_{i}I & Z^{\left(i\right)}\left(r\right)\\
\left(Z^{\left(i\right)}\left(r\right)\right)^{T} & Q^{\left(i\right)}\left(r\right)-\bar{u}_{i}+\bar{m}_{i}
\end{array}\right),
\end{align*}
where $\bar{u}_{i}:=\sqrt{\frac{N}{N-1}\frac{p}{p-1}}u_{i}$ and $\bar{m}_{i}:=\sqrt{\frac{N}{(N-1)p(p-1)}}m_{i}\left(r,u_{1},u_{2}\right)$ and where
$m_{i}\left(r,u_{1},u_{2}\right)$,
$\mathbf{G}_{N-2}^{\left(i\right)}\left(r\right)$,
$Z^{\left(i\right)}\left(r\right)$
and
$Q^{\left(i\right)}\left(r\right)$   are defined in Section \ref{sec:Conditional-distributions}.
To lighten the notation, we henceforth omit $i$ from the super- and
sub-scripts for all variables.

Let $\tilde{A}_{N-1}$ be an orthogonal matrix (measurable w.r.t.
 $Z\left(r\right)$) such that
\[
\tilde{A}_{N-1}Z\left(r\right)=\tilde{Z}\left(r\right):=(0,\ldots,0,\|Z\left(r\right)\|),
\]
and set
\[
A_{N-1}=\left(\begin{array}{cc}
\tilde{A}_{N-1} & 0\\
0 & 1
\end{array}\right).
\]
Then, denoting $\tilde{\mathbf{G}}_{N-2}\left(r\right):=\tilde{A}_{N-1}\mathbf{G}_{N-2}\left(r\right)\tilde{A}_{N-1}^{T}$,
we have that
\[
A_{N-1}\mathbf{M}A_{N-1}^{T}=\left(\begin{array}{cc}
\tilde{\mathbf{G}}_{N-2}\left(r\right)-\bar{u}I & \tilde{Z}\left(r\right)\\
\tilde{Z}\left(r\right)^{T} & Q\left(r\right)-\bar{u}+\bar{m}
\end{array}\right).
\]

Therefore,
\begin{align*}
\det\mathbf{M}=\det\left(A_{N-1}\mathbf{M}A_{N-1}^{T}\right) & =(Q\left(r\right)-\bar{u}+\bar{m})\det\left(\tilde{\mathbf{G}}_{N-2}\left(r\right)-\bar{u}I\right)\\
 & -\|Z\left(r\right)\|^{2}\det\left(\tilde{\mathbf{G}}_{N-3}\left(r\right)-\bar{u}I\right),
\end{align*}
where by an abuse of notation we denote by $\tilde{\mathbf{G}}_{N-3}\left(r\right)$
the $N-3\times N-3$ upper-left submatrix of $\tilde{\mathbf{G}}_{N-2}\left(r\right)$.

Note that $\tilde{\mathbf{G}}_{N-2}\left(r\right)$, $\tilde{Z}\left(r\right)$
and $Q\left(r\right)$ are independent, and that $\tilde{\mathbf{G}}_{N-2}\left(r\right)$
has the same law as $\mathbf{G}_{N-2}\left(r\right)$. Specifically,
if we let $\hat{\mathbf{G}}_{j}$ be a $j\times j$ centered, symmetric
matrix with independent elements, up to symmetry, whose off-diagonal
elements have variance $1/(N-1)$ and whose on-diagonal elements have
variance $2/(N-1)$, then, for any fixed $r$, $\tilde{\mathbf{G}}_{N-2}\left(r\right)$
and $\tilde{\mathbf{G}}_{N-3}\left(r\right)$ have the same law as
$\hat{\mathbf{G}}_{N-2}$ and $\hat{\mathbf{G}}_{N-3}$. Using the
fact that $|x+y|^{k}\leq2^{k-1}(|x|^{k}+|y|^{k})$,
\begin{align*}
\E\Big\{|\det\mathbf{M}|^{k}\Big\} & \leq2^{k-1}\bigg[\E\Big(|Q\left(r\right)-\bar{u}+\bar{m}|^{k}\Big)\E\Big(\Big|\det\Big(\hat{\mathbf{G}}_{N-2}-\bar{u}I\Big)\Big|^{k}\Big)\\
 & +\E\Big(\|Z\left(r\right)\|^{2k}\Big)\E\Big(\Big|\det\Big(\hat{\mathbf{G}}_{N-3}-\bar{u}I\Big)\Big|^{k}\Big)\bigg].
\end{align*}

Since $\mathbf{W}$ has the same law as $\hat{\mathbf{G}}_{N-1}-\bar{u}I$,
to prove the lemma we need to bound the ratio of the above and
\[
\E\Big\{|\det\mathbf{W}|\Big\}^{k}=\E\Big\{\Big|\det\Big(\hat{\mathbf{G}}_{N-1}-\bar{u}I\Big)\Big|\Big\}^{k}.
\]

One can check that for large $N$,  $r\in(-1,1)$ and $u_i\in(-E_\infty,E_\infty)$, the variance of each
element of $Z\left(r\right)$ is bounded by $\frac{1}{N-1}$ and
\begin{align*}
\E\Big(|Q\left(r\right)-\bar{u}+\bar{m}|^{k}\Big) & \leq3^{k}\\
\E\Big(\|Z\left(r\right)\|^{2k}\Big) & \leq2.
\end{align*}
Hence, it remains to show that for large $N$ and $k>1$ even,
\begin{equation}
  \label{eq-detcomp}
\frac{\E\Big(\Big|\det\Big(\hat{\mathbf{G}}_{N-2}-\bar{u}I\Big)\Big|^{k}\Big)}{\E\Big\{\Big|\det\Big(\hat{\mathbf{G}}_{N-1}-\bar{u}I\Big)\Big|\Big\}^{k}},\,\frac{\E\Big(\Big|\det\Big(\hat{\mathbf{G}}_{N-3}-\bar{u}I\Big)\Big|^{k}\Big)}{\E\Big\{\Big|\det\Big(\hat{\mathbf{G}}_{N-1}-\bar{u}I\Big)\Big|\Big\}^{k}}\leq CN^{\frac{k(k-1)}{2}}
\end{equation}
for some constant $C>0$, uniformly in $\bar{u}\in[-\bar T,\bar T]$ for $\bar T:= T  \sqrt{\frac{N}{N-1}\frac{p}{p-1}}$. Note that by our assumption that $T\in(0,E_\infty)$, $\bar T <2-\epsilon$ for some small $\epsilon>0$ and large $N$.

Toward showing \eqref{eq-detcomp}, it is worthwhile to switch notation,
in order
to better fit existing literature.
Let $\bY_{N-1}$ denote a GOE matrix, of dimension $N-1$,
normalized so that its spectrum is supported on
$[-\sqrt{2},\sqrt{2}]$ (this is the scaling that \cite{A-BA-C} use,
i.e. the variance of the off-diagonal elements is
$1/2N$).  From \cite[Lemma 3.3]{A-BA-C} (taking $t\to 0$ with $N$ fixed)
we obtain that
\begin{equation}
\label{eq-1}
\mathbb{E}\big(|\mbox{\rm det} \bY_{N-1}-xI|\big)=\frac{\sqrt{2}\Gamma(\frac{N}{2})\sqrt{N(N-1)}}{(N-1)^{N/2} } e^{(N-1) x^2/2} \rho_{N}(x),
\end{equation}
where $\rho_N(x)dx=\E L_N(dx)$ is the expected density of states, which converges to the semicircle law at $x$ uniformly in compact subsets of $(-\sqrt{2},\sqrt{2})$.
Applying the Stirling formula, we conclude that
\begin{equation}
\label{eq-1a}
\mathbb{E}\big(|\mbox{\rm det} \bY_{N-1}-xI|\big)=C_1(x) 2^{-N/2}\sqrt{N} e^{-N/2+(N-1) x^2/2}(1+o(1)),
\end{equation}
where $C_1(x)$ is explicit, continuous in $x$, strictly positive
on $(-\sqrt{2},\sqrt{2})$, and the error is uniform in compact
subsets of $(-\sqrt{2},\sqrt{2})$.

On the other hand, using \cite[(56)]{BrezinHikam} (correcting for the factor
$e^{Nk x^2}2^{-Nk/2}$, which is missing from
their formulation because they omitted the values of $f$ at the saddle point, see \cite[(50),(54)]{BrezinHikam} ), we have that
\begin{equation}
\label{eq-2}
\mathbb{E}\big(|\mbox{\rm det} \bY_{N-1}-xI|^{2k}\big)=C_{2k}(x) N^{2k^2}
2^{-Nk}e^{-Nk+Nk x^2}(1+o(1)),
\end{equation}
where $C_{2k}(x)$ is an explicit constant, continuous in $x$ and
independent of $N$, and strictly positive in $(-\sqrt{2},\sqrt{2})$.

All together, we obtain that, uniformly on compact subsets of
$(-\sqrt{2},\sqrt{2})$,
\begin{equation}
\label{eq-3}
\frac{\mathbb{E}\big(|\mbox{\rm det} \bY_{N-1}-xI|^{2k}\big)}{\Big(\mathbb{E}\big(|\mbox{\rm det} \bY_{N-1}-xI|\big)\Big)^{2k}}=c_{2k}(x) N^{2k^2-k}(1+o(1)),
\end{equation}
where $c_{2k}(x)$ is some positive explicit constant, continuous in $x$.

The claim \eqref{eq-detcomp}
follows from \eqref{eq-3} together with the fact that the ratio of the right
hand side of \eqref{eq-1} for $N-1$ and $N-\ell$ with $\ell$ fixed is bounded
independently of $N$.
\end{proof}

\subsection{\label{subsec:small_overlaps}Small overlaps }

In this section we treat the range of overlaps from which comes the
main contribution to the ratio of the moments. Namely, we will prove
the following lemma.
\begin{lem}
\label{lem:small_r}For any $p\geq32$ and constants $C>0$ and $T\in (0,E_\infty)$, the
following holds, with the function  $B(r)=O(r^{3})$
from Corollary \ref{cor:smalloverlaps}. Setting
\begin{equation}
I_{N}:=(-C\sqrt{\log N/N},C\sqrt{\log N/N}),\label{eq:INsmall}
\end{equation}
uniformly in $u_{1},\,u_{2}\in[-T,T]$, we have that
\[
\limsup_{N\to\infty}\int_{I_{N}}\sqrt{\frac{N}{2\pi}}e^{-\frac{1}{2}Nr^{2}+NB(r)}\Delta_{N}(r,u_{1},u_{2})dr\leq 1.
\]
\end{lem}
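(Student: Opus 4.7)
The plan is to establish that $\Delta_N(r,u_1,u_2) = 1+o(1)$ uniformly for $r \in I_N$ and $u_1, u_2 \in [-T, T]$. Granting this, after substituting $s = r\sqrt{N}$ the integrand becomes $\varphi(s) e^{NB(s/\sqrt{N})}\Delta_N(s/\sqrt{N},u_1,u_2)$, where $\varphi$ is the standard normal density. Since $B(r) = O(r^3)$, the exponent $NB(r)$ is uniformly $O((\log N)^{3/2}/\sqrt N) = o(1)$ on $I_N$; together with $\int_{|s| \leq C\sqrt{\log N}} \varphi(s)\,ds \leq 1$, this yields $\limsup \leq 1$ as desired.

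To prove $\Delta_N(r,u_1,u_2) = 1 + o(1)$, I would exploit the Schur complement block decomposition from the proof of Lemma \ref{lem:detMomentBd}:
\begin{equation*}
\det \mathbf{M}^{(i)} = \bigl(Q^{(i)}(r) - \bar u_i + \bar m_i\bigr)\det\bigl(\tilde{\mathbf{G}}_{N-2}^{(i)}(r) - \bar u_i I\bigr) - \|Z^{(i)}(r)\|^2\det\bigl(\tilde{\mathbf{G}}_{N-3}^{(i)}(r) - \bar u_i I\bigr).
\end{equation*}
Because $\tilde{\mathbf{G}}_{N-2}^{(i)}(r)$, $Z^{(i)}(r)$ and $Q^{(i)}(r)$ are independent for each $i$, the scalar factor $Q^{(i)} - \bar u_i + \bar m_i$ is of order one, and $\|Z^{(i)}\|^2$ has mean $O(1/N)$, the second summand is negligible relative to the first. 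Applying the analogous decomposition to $\det \mathbf{W}^{(i)}$ (at $r=0$), the problem reduces to comparing
\begin{equation*}
\mathbb{E}\Bigl\{\prod_{i=1,2}\bigl(Q^{(i)}(r) - \bar u_i + \bar m_i\bigr)\det\bigl(\tilde{\mathbf{G}}_{N-2}^{(i)}(r) - \bar u_i I\bigr)\Bigr\}
\end{equation*}
with the product of the corresponding single-index expectations.

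The scalar pair $(Q^{(1)}(r), Q^{(2)}(r))$ is jointly Gaussian with covariance tending to the identity as $r \to 0$, and the drifts $\bar m_i(r,u_1,u_2)$ are smooth in $r$; their joint expectation therefore factorizes up to $1+o(1)$. The delicate part is to show that the joint expectation of the two correlated shifted GOE determinants also factorizes. The cross-covariance between $\tilde{\mathbf{G}}_{N-2}^{(1)}(r)$ and $\tilde{\mathbf{G}}_{N-2}^{(2)}(r)$ is of order $|r|^{p-2}$, which for $r \in I_N$ is at most $(\log N/N)^{(p-2)/2}$. I would apply Hölder's inequality to decouple the joint expectation into an independent-matrix leading term and a small residual dependent term; the residual is then controlled by the higher-moment estimates of Lemma \ref{lem:detMomentBd}, which contribute at most $N^{O(1)}$.

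The main obstacle is balancing the two competing scales in the Hölder step. A Hölder exponent too far from $1$ inflates the polynomial-in-$N$ factor coming from Lemma \ref{lem:detMomentBd}, whereas an exponent too close to $1$ fails to exploit the $|r|^{p-2}$ smallness of the cross-covariance. The threshold $p \geq 32$ arises precisely as the smallest $p$ for which one can select a Hölder exponent making the residual $o(1)$ uniformly on $I_N$. Once this delicate factorization is established, combining it with the negligibility of the Schur-complement remainder and with the $e^{NB(r)} = 1+o(1)$ uniform bound yields $\Delta_N(r,u_1,u_2) = 1 + o(1)$ and hence the lemma.
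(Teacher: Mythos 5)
Your opening reduction (it suffices to show $\Delta_N(r,u_1,u_2)\leq 1+o(1)$ uniformly on $I_N\times[-T,T]^2$, since $NB(r)=o(1)$ there and the Gaussian integral is at most $1$) is exactly the paper's first step. After that, however, your argument has a genuine gap at its core: you never supply a mechanism that actually proves the joint expectation of the two correlated determinants factorizes to within $1+o(1)$. H\"older's inequality cannot ``decouple'' $\mathbb{E}\{|\det(\cdot)|\,|\det(\cdot)|\}$ into a product of first moments plus a small residual; applied directly it gives $\mathbb{E}\{XY\}\leq \mathbb{E}\{|X|^k\}^{1/k}\mathbb{E}\{|Y|^{k'}\}^{1/k'}$, and by Lemma \ref{lem:detMomentBd} the right-hand side exceeds the product of first moments by a \emph{polynomial} factor in $N$ --- fatal when the target is $1+o(1)$. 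Nor is the smallness of the cross-covariance, $O(|r|^{p-2})$, usable on its own: writing $\tilde{\mathbf{G}}^{(i)}_{N-2}(r)$ as an independent matrix plus a full-rank perturbation of operator norm $\approx |r|^{(p-2)/2}$, the determinant of an $(N-2)\times(N-2)$ matrix is \emph{not} stable under such a perturbation in general, because the unperturbed matrix can have many eigenvalues much smaller than the perturbation's norm. This is precisely the difficulty the paper's proof is built to overcome, and it is absent from your proposal.

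The paper's route is: decompose $\mathbf{M}^{(i)}=\mathbf{A}^{(i)}_{N-1}+\mathbf{B}^{(i)}_{N-1}$ with $\mathbf{A}^{(1)},\mathbf{A}^{(2)}$ genuinely independent shifted GOE matrices (so their product of determinants factorizes exactly, matching the denominator) and $\mathbf{B}^{(i)}$ the correlated remainder; introduce a good event $\mathcal{E}$ on which (i) $\|\mathbf{B}^{(i)}\|_{op}\leq N^{-\gamma}$ (Lemma \ref{lem:B_ub}, which needs $\gamma<(p-3)/4$) and (ii) the number of eigenvalues of $\mathbf{A}^{(i)}$ in each dyadic shell $[s_\ell,s_{\ell+1})$ near zero is controlled (the overcrowding estimate, Lemma \ref{lem:evgap}); then apply the deterministic inequality of Corollary \ref{cor:Miroslav-1} to get $|\det(\mathbf{A}+\mathbf{B})|\leq|\det\mathbf{A}|\,e^{o(1)}$ on $\mathcal{E}$. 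H\"older enters only to dispose of the rare event $\mathcal{E}^c$, where the polynomial loss from Lemma \ref{lem:detMomentBd} is beaten by $\P\{\mathcal{E}^c\}^{1/k'}$. The constraint $p\geq 32$ comes from reconciling $\gamma<(p-3)/4$ with the exponents forced by the overcrowding bound and the moment growth ($k=4$ forces $\beta>6$, hence $\gamma>7$, hence $p>31$), not from optimizing a H\"older exponent against the cross-covariance as you suggest. Your Schur-complement decomposition does not circumvent any of this (the $(N-2)$-blocks remain correlated across $i$, and the claim that the $\|Z^{(i)}\|^2$ term is ``negligible relative to the first'' inside an absolute value and an expectation is itself unjustified at the $1+o(1)$ level). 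To repair the proof you need the small-eigenvalue control plus a determinant perturbation inequality, or an equivalent substitute.
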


Before we turn to the proof, we represent the matrices in the definition
of $\Delta_{N}(r,u_{1},u_{2})$ (see (\ref{eq:DeltaN})) as linear
combinations of independent variables. Fix some $r$, $u_{1}$ and
$u_{2}$. Let $V$, $V^{(1)}$ and $V^{(2)}$ be i.i.d. Gaussian vectors
of length $N-2$ each with distribution
\[
N\Big(0,\frac{1}{N-1}I_{N-2}\Big),
\]
where $I_{N-2}$ is the identity matrix. Let $D$, $D^{(1)}$ and
$D^{(2)}$ be i.i.d. Gaussian variables each with distribution
\[
N\Big(0,\frac{2}{N-1}\Big).
\]
Recall the definition of $Z^{\left(i\right)}(r)$ and $Q^{\left(i\right)}\left(r\right)$
from Section \ref{sec:Conditional-distributions}.

We may define all the variables above on the same probability space
such that
\[
\begin{aligned}Z_{j}^{\left(i\right)}\left(r\right) & =\sqrt{\frac{\Sigma_{Z,11}\left(r\right)-\left|\Sigma_{Z,12}\left(r\right)\right|}{p(p-1)}}V^{(i)}+\left(\mbox{sgn}\left(\Sigma_{Z,12}\left(r\right)\right)\right)^{i}\sqrt{\frac{\left|\Sigma_{Z,12}\left(r\right)\right|}{p(p-1)}}V,\\
Q^{\left(i\right)}\left(r\right) & =\sqrt{\frac{\Sigma_{Q,11}\left(r\right)-\left|\Sigma_{Q,12}\left(r\right)\right|}{2p(p-1)}}D^{(i)}+\left(\mbox{sgn}\left(\Sigma_{Q,12}\left(r\right)\right)\right)^{i}\sqrt{\frac{\left|\Sigma_{Q,12}\left(r\right)\right|}{2p(p-1)}}D.
\end{aligned}
\]

Also recall the definition of $\mathbf{G}_{N-2}^{\left(i\right)}\left(r\right)$,
using the i.i.d. matrices $\bar{\mathbf{G}}_{N-2}$, $\bar{\mathbf{G}}_{N-2}^{\left(1\right)}$,
and $\bar{\mathcal{\mathbf{G}}}_{N-2}^{\left(2\right)}$, from Section
\ref{sec:Conditional-distributions}, and assume they are independent
of all other variables. Define
\begin{align*}
\hat{\mathbf{X}}_{N-1}^{\left(i\right)}\left(r\right) & =\left(\begin{array}{cc}
\mathbf{G}_{N-2}^{\left(i\right)}\left(r\right) & V^{\left(i\right)}\\
\left(V^{\left(i\right)}\right)^{T} & D^{\left(i\right)}
\end{array}\right),
\end{align*}
and note that for each $i=1,2$, $\hat{\mathbf{X}}_{N-1}^{\left(i\right)}\left(r\right)$
is a GOE matrix.

We have that
\[
\Delta_{N}(r,u_{1},u_{2})=\frac{\mathbb{E}\left\{ \prod_{i=1,2}\left|\det\left(\mathbf{A}_{N-1}^{\left(i\right)}+\mathbf{B}_{N-1}^{\left(i\right)}\right)\right|\right\} }{\prod_{i=1,2}\mathbb{E}\left\{ \left|\det\left(\mathbf{A}_{N-1}^{\left(i\right)}\right)\right|\right\} },
\]
where
\[
\mathbf{A}_{N-1}^{\left(i\right)}=\mathbf{A}_{N-1}^{\left(i\right)}(r,u_{i})=\hat{\mathbf{X}}_{N-1}^{\left(i\right)}\left(0\right)-\sqrt{\frac{N}{N-1}\frac{p}{p-1}}u_{i}I
\]
and
\begin{align*}
\mathbf{B}_{N-1}^{\left(i\right)} & =\mathbf{B}_{N-1}^{\left(i\right)}(r,u_{1},u_{2})=\mathbf{X}_{N-1}^{\left(i\right)}(r)-\hat{\mathbf{X}}_{N-1}^{\left(i\right)}\left(0\right)+\mathbf{E}_{N-1}^{\left(i\right)}(r,\sqrt{N}u_{1},\sqrt{N}u_{2})\\
 & =\left(\begin{array}{cc}
\mathbf{G}_{N-2}^{\left(i\right)}\left(r\right)-\mathbf{G}_{N-2}^{\left(i\right)}\left(0\right) & Z^{\left(i\right)}\left(r\right)-V^{\left(i\right)}\\
\left(Z^{\left(i\right)}\left(r\right)\right)^{T}-\left(V^{\left(i\right)}\right)^{T} & Q^{\left(i\right)}\left(r\right)-D^{\left(i\right)}+\frac{m_{i}\left(r,u_{1},u_{2}\right)}{\sqrt{(N-1)p(p-1)}}
\end{array}\right).
\end{align*}

To prove Lemma \ref{lem:small_r} we will need the following auxiliary
results which we prove below. For any $N\times N$ symmetric matrix $\mathbf{T}$
of denote by $\lambda_{i}(\mathbf{T})$, $i=1,\ldots,N$, the eigenvalues
of a matrix $\mathbf{T}$.
\begin{lem}{\bf (Bound on the perturbations $\mathbf{B}_{N-1}^{\left(i\right)}$)}
\label{lem:B_ub} Fix $T>0$.
For any $p\geq3$ and $K>0$ , there exists $c>0$
such that the following holds. If $t_{N}$ is a sequence such that
$\liminf_{N\to\infty}t_{N}(N/\log N)^{\frac{p-3}{4}}\geq c$, then
for large $N$, uniformly in $r\in I_{N}$ defined in (\ref{eq:INsmall})
and $u_1,u_2\in [-T,T]$,
\[
\P\left\{ \max_{j\leq N-1}|\lambda_{j}(\mathbf{B}_{N-1}^{\left(i\right)})|\geq t_{N}\right\} \leq e^{-KN}.
\]
\end{lem}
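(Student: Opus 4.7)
The plan is to bound $\max_j|\lambda_j(\mathbf{B}_{N-1}^{(i)})|$ by splitting $\mathbf{B}_{N-1}^{(i)}$ into three pieces: the $(N-2)\times(N-2)$ upper-left block $\mathbf{G}_{N-2}^{(i)}(r)-\mathbf{G}_{N-2}^{(i)}(0)$, the rank-two border formed by the vector $Z^{(i)}(r)-V^{(i)}$ and its transpose, and the scalar corner $\gamma_i := Q^{(i)}(r)-D^{(i)}+m_i(r,u_1,u_2)/\sqrt{(N-1)p(p-1)}$. By subadditivity of the operator norm, it suffices to show each piece has spectral radius at most a constant times $|r|^{(p-2)/2}$ on an event of probability at least $1-e^{-KN}$, uniformly in $r\in I_N$ and $u_1,u_2\in[-T,T]$.

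For the upper-left block, I would use an explicit representation of $\mathbf{G}_{N-2}^{(i)}(r)$ (analogous to the couplings displayed just before the lemma for $Z^{(i)}$ and $Q^{(i)}$) as a linear combination $a(r)\bar{\mathbf{G}}_{N-2}^{(i)}+b(r)\bar{\mathbf{G}}_{N-2}$ of independent normalized GOE matrices, where $a(0)=1$ and $b(0)=0$. The $p$-spin covariance structure together with a short Taylor expansion of the conditional covariances from Section \ref{sec:Conditional-distributions} forces $|a(r)-1|=O(|r|^{p-2})$ and $|b(r)|=O(|r|^{(p-2)/2})$ as $r\to 0$. Combining with the standard large-deviation bound on the top eigenvalue of a GOE matrix, which yields that $\bar{\mathbf{G}}_{N-2}$ and $\bar{\mathbf{G}}_{N-2}^{(i)}$ each have operator norm at most $3$ with probability at least $1-e^{-(K+1)N}$, this produces the desired $O(|r|^{(p-2)/2})$ bound on the main block.

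For the border, the coupling just before the lemma writes each coordinate of $Z^{(i)}(r)-V^{(i)}$ as a centered Gaussian with variance $O(|r|^{p-2}/(N-1))$, using the identities $|\Sigma_{Z,12}(r)|=O(|r|^{p-2})$ and $\Sigma_{Z,11}(r)/(p(p-1))-1/(N-1)=O(|r|^{p-2}/(N-1))$ (both consequences of the fact that the conditional covariance depends on $r$ only through the $p$-spin covariance $r^p$ and its derivatives). A standard chi-square concentration inequality then bounds the Euclidean norm of $Z^{(i)}(r)-V^{(i)}$, and hence the operator norm of the border, by a constant times $|r|^{(p-2)/2}$ on an event of probability at least $1-e^{-(K+1)N}$. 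The corner is handled analogously: the random part $Q^{(i)}(r)-D^{(i)}$ has standard deviation of order $|r|^{(p-2)/2}/\sqrt{N-1}$ by the displayed coupling, and the deterministic part $m_i(r,u_1,u_2)/\sqrt{(N-1)p(p-1)}$ is controlled via the explicit formula from Section \ref{sec:Conditional-distributions}, which shows that $m_i(r,u_1,u_2)\to 0$ polynomially as $r\to 0$, uniformly in $u_1,u_2\in[-T,T]$.

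Summing the three contributions yields $\max_j|\lambda_j(\mathbf{B}_{N-1}^{(i)})|\leq c_0|r|^{(p-2)/2}\leq c_0 C^{(p-2)/2}(\log N/N)^{(p-2)/4}$ on an event of probability at least $1-e^{-KN}$. Since the hypothesis $\liminf t_N (N/\log N)^{(p-3)/4}\geq c$ is stronger than $t_N \geq \text{const}\cdot(\log N/N)^{(p-2)/4}$ by a factor $(N/\log N)^{1/4}\to\infty$, this event is contained in $\{\max_j|\lambda_j(\mathbf{B}_{N-1}^{(i)})|<t_N\}$ for all large $N$, completing the proof. The main technical step, and hence the main obstacle, is extracting the precise leading-order $r$-dependence of the conditional covariances $\Sigma_{Z,jk}(r)$, $\Sigma_{Q,jk}(r)$, of the coefficients $a(r),b(r)$ in the GOE decomposition, and of the mean shift $m_i(r,u_1,u_2)$, from the Kac-Rice formulas of Section \ref{sec:Conditional-distributions}; once those asymptotics are in hand, the concentration estimates above are routine.
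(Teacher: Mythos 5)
Your decomposition (upper-left block plus rank-two border plus scalar corner), the GOE deviation bound for the block, and chi-square concentration for the border are exactly the paper's strategy, and your treatment of the upper-left block is essentially right: indeed $\mathbf{G}_{N-2}^{(i)}(r)=\sqrt{1-|r|^{p-2}}\,\bar{\mathbf{G}}^{(i)}\pm\sqrt{|r|^{p-2}}\,\bar{\mathbf{G}}$, so that block contributes $O(|r|^{(p-2)/2})$. However, the covariance asymptotics you assert for the border --- which you yourself flag as the crux and then defer --- are wrong, and the error is precisely at the exponent the lemma is built around. From \eqref{eq:sigmaZ}, $\Sigma_{Z,12}(r)=p(p-1)^2r^{p-1}-p(p-1)(p-2)r^{p-3}+O(r^{3p-5})$, so $|\Sigma_{Z,12}(r)|=O(|r|^{p-3})$, not $O(|r|^{p-2})$; similarly $\Sigma_{Q,12}(r)=O(|r|^{p-4})$ by \eqref{eq:SigmaQ}. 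The gradient--Hessian--Hessian conditional covariances do not decorrelate at the rate $r^{p-2}$ of the GOE block. Consequently each coordinate of $Z^{(i)}(r)-V^{(i)}$ has standard deviation of order $|r|^{(p-3)/2}/\sqrt{N}$, the border's operator norm is of order $|r|^{(p-3)/2}\asymp(\log N/N)^{(p-3)/4}$ for $r\in I_N$, and this is exactly where the exponent $(p-3)/4$ in the hypothesis comes from. The hypothesis is therefore sharp for this argument: there is no spare factor $(N/\log N)^{1/4}$ as your last step claims, and the conclusion must instead read that the perturbation is at most $c_0(\log N/N)^{(p-3)/4}$ with the required probability, so that $t_N\geq c(\log N/N)^{(p-3)/4}$ suffices once $c\geq c_0$ (this is how the constant $c=c(K,T,p,C)$ enters the statement).

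A secondary, fixable point: you cannot conclude $\P\{\|\bar{\mathbf{G}}\|\geq 3\}\leq e^{-(K+1)N}$ for arbitrary $K$ from the GOE large deviation principle, since the rate at a fixed level is a fixed constant. Either take the norm threshold $K$-dependent (which only inflates $c_0$), or argue as the paper does: since the prefactor $f(r)=\sqrt{|r|^{p-2}}(1+o(1))$ tends to zero, the relevant deviation level for $\bar{\mathbf{G}}$ is $t_N/f(r)\to\infty$, yielding $\P\{\max_j|\lambda_j(\mathbf{G}_{N-2}^{(i)}(r)-\mathbf{G}_{N-2}^{(i)}(0))|\geq t_N\}\leq 2\exp\{-(N-2)t_N^2/(10|r|^{p-2})\}\ll e^{-KN}$. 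With the covariance exponents corrected and this adjustment made, your outline matches the paper's proof.
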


\begin{lem}{\bf (Overcrowding estimate)}
\label{lem:evgap}For an $N\times N$ GOE matrix $\mathbf{G}$, for
any measurable set $I\subset\R$,
\[
\P\left\{ \mathbf{G}\text{ has at least $t$ eigenvalues in }I\right\} \leq \frac{10N|I|}{t}.
\]
\end{lem}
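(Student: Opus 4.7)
The plan is a short two-step reduction: Markov's inequality, then a classical pointwise bound on the mean density of GOE eigenvalues. Let $N_I(\mathbf{G})$ denote the number of eigenvalues of $\mathbf{G}$ lying in $I$. By Markov's inequality,
\[
\P\big\{N_I(\mathbf{G})\geq t\big\}\leq \frac{\E[N_I(\mathbf{G})]}{t},
\]
so it is enough to prove the mean bound $\E[N_I(\mathbf{G})]\leq 10N|I|$ for every measurable $I\subset\R$.

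Writing $\E[N_I(\mathbf{G})]=N\int_I \bar\rho_N(x)\,dx$, where $\bar\rho_N$ is the expected empirical probability density of eigenvalues (a probability density on $\R$), this reduces to the pointwise estimate $\bar\rho_N(x)\leq 10$, uniformly in $x\in\R$ and $N\geq 1$. Under the normalization adopted in the paper (variance $1/N$ off-diagonal, $2/N$ on-diagonal), the spectrum of $\mathbf{G}$ concentrates on $[-2,2]$ and the limiting density is the semicircle $\frac{1}{2\pi}\sqrt{4-x^{2}}$, whose supremum is $1/\pi$; thus the sought finite-$N$ inequality is a mild quantitative strengthening of the semicircle bound.

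The main (and only nontrivial) step is the uniform-in-$N$ pointwise control of $\bar\rho_N$. This is classical: one uses the Christoffel--Darboux representation of $\bar\rho_N$ as a sum of products of normalized Hermite functions, combined with uniform bounds on Hermite functions --- Plancherel--Rotach asymptotics in the bulk and Airy-type asymptotics near the edges $\pm 2$ --- which together yield $\bar\rho_N(x)\leq C$ for some absolute constant $C$. The delicate region is the spectral edge, where the semicircle density vanishes but the finite-$N$ density is merely bounded; once this is handled by the Hermite-function estimates, one readily checks that $C\leq 10$ is comfortable. An equivalent route that avoids Hermite asymptotics altogether combines the deterministic resolvent inequality
\[
N_{[x-\eta,\,x+\eta]}(\mathbf{G})\leq 2\eta\,\mathrm{Im}\,\mathrm{tr}\big(\mathbf{G}-(x+i\eta)I\big)^{-1},\qquad \eta>0,
\]
(which follows from $\eta/((\lambda_j-x)^{2}+\eta^{2})\geq 1/(2\eta)$ whenever $|\lambda_j-x|\leq\eta$) with standard uniform bounds on the imaginary part of the expected Stieltjes transform of the GOE at a fixed scale $\eta\sim 1$; after dividing by $2\eta$ and sending $\eta\downarrow 0$ one recovers the density bound with the same universal constant.
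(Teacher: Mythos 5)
Your argument is correct and is essentially identical to the paper's proof: Markov's inequality reduces the claim to the bound $\E[N_I(\mathbf{G})]=N\int_I\bar\rho_N(x)\,dx\leq 10N|I|$, which follows from a uniform pointwise bound on the expected density of states. The paper simply cites the uniform convergence of $\bar\rho_N$ to the (bounded) semicircle density from Forrester's book where you sketch the Hermite-function/resolvent derivations of that bound, but the structure of the proof is the same.
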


Lastly, we will need the following corollary of the main theorem of
\cite{Miroslav}.
\begin{cor}
\cite{Miroslav}\label{cor:Miroslav-1} Let $\mathbf{C}_{1}$, $\mathbf{C}_{2}$
be two (deterministic) real, symmetric $N\times N$ matrices and let
$\lambda_{j}\left(\mathbf{C}_{i}\right)$ denote the eigenvalues of
$\mathbf{C}_{i}$, ordered with non-decreasing absolute value. Suppose
that the number of non-zero eigenvalues of $\mathbf{C}_{2}$ is $d$
at most. Then, assuming $\lambda_1(\mathbf{C}_{1})\neq0$,
\[
\left|\det\left(\mathbf{C}_{1}+\mathbf{C}_{2}\right)\right|\leq|\det(\mathbf{C}_{1})|\prod_{j=1}^{d}\left(1+\frac{\left|\lambda_{N}\left(\mathbf{C}_{2}\right)\right|}{\left|\lambda_{j}\left(\mathbf{C}_{1}\right)\right|}\right).
\]
\end{cor}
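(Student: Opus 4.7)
The plan is to combine the Sylvester (Weinstein--Aronszajn) determinant identity, a determinantal bound in terms of singular values (which I would take as the main theorem of \cite{Miroslav}), and a compression inequality for singular values of a symmetric matrix.  Since $\lambda_1(\mathbf{C}_1)\neq 0$ the matrix $\mathbf{C}_1$ is invertible, and since $\mathbf{C}_2$ has at most $d$ non-zero eigenvalues I would write $\mathbf{C}_2 = UDU^T$, where $U\in \R^{N\times d}$ has as columns the orthonormal eigenvectors of $\mathbf{C}_2$ belonging to those non-zero eigenvalues and $D\in \R^{d\times d}$ is the diagonal matrix of the eigenvalues themselves.  Sylvester's identity (or the matrix determinant lemma applied to the factorisation $\mathbf{C}_2 = U(DU^T)$, which does not require any sign condition on $D$) then gives
\[
\det(\mathbf{C}_1 + \mathbf{C}_2) \;=\; \det(\mathbf{C}_1 + UDU^T) \;=\; \det(\mathbf{C}_1)\,\det\bigl(I_d + D\, U^T\mathbf{C}_1^{-1}U\bigr),
\]
reducing the claim to proving $|\det(I_d + M)|\leq \prod_{j=1}^d\bigl(1+ |\lambda_N(\mathbf{C}_2)|/|\lambda_j(\mathbf{C}_1)|\bigr)$ for the $d\times d$ matrix $M:=D\,U^T\mathbf{C}_1^{-1}U$.

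Next, I would invoke \cite{Miroslav}, applied to the $d\times d$ matrix $M$, to get $|\det(I_d + M)|\leq \prod_{j=1}^d (1 + \sigma_j(M))$, where $\sigma_1(M)\geq\cdots\geq \sigma_d(M)$ denote the singular values of $M$ in non-increasing order.  To bound the $\sigma_j(M)$ individually I would use submultiplicativity of singular values, $\sigma_j(M)\leq \|D\|_{\mathrm{op}}\,\sigma_j(U^T \mathbf{C}_1^{-1} U)$, and then observe that $\|D\|_{\mathrm{op}} = |\lambda_N(\mathbf{C}_2)|$ by the chosen ordering of the eigenvalues of $\mathbf{C}_2$ by non-decreasing absolute value.

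The remaining factor is handled by the compression inequality: for any symmetric $A\in\R^{N\times N}$ and any $U\in \R^{N\times d}$ with orthonormal columns, $\sigma_j(U^T A U)\leq \sigma_j(A)$ for $j=1,\ldots,d$. I would prove this from the low-rank approximation formula $\sigma_j(B) = \min_{\mathrm{rank}(C)<j}\|B - C\|_{\mathrm{op}}$: if $\tilde C$ is a best symmetric rank-$(j-1)$ approximant of $A$, then $U^T \tilde C U$ has rank less than $j$ and $\|U^T(A-\tilde C)U\|_{\mathrm{op}}\leq \|A-\tilde C\|_{\mathrm{op}} = \sigma_j(A)$. Applied with $A=\mathbf{C}_1^{-1}$, and noting that the $j$-th largest absolute eigenvalue of $\mathbf{C}_1^{-1}$ is $1/|\lambda_j(\mathbf{C}_1)|$ (since inversion reverses the ordering by absolute value, and here $|\lambda_1(\mathbf{C}_1)|\leq\cdots\leq|\lambda_N(\mathbf{C}_1)|$), this gives $\sigma_j(U^T \mathbf{C}_1^{-1} U)\leq 1/|\lambda_j(\mathbf{C}_1)|$, hence $\sigma_j(M)\leq |\lambda_N(\mathbf{C}_2)|/|\lambda_j(\mathbf{C}_1)|$, and chaining the three bounds yields the stated inequality.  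The only non-routine step is the compression inequality for symmetric but not necessarily sign-definite $A$; the min--max / best-approximation argument above handles it without needing to split $A$ into positive and negative parts, so I expect no real obstacle.
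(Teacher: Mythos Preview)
The paper does not actually supply a proof of this corollary: it is merely stated as a consequence of the main theorem of \cite{Miroslav} and then used as a black box in the proof of Lemma~\ref{lem:small_r}. So there is no argument in the paper to compare yours against.

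Your derivation is correct. Each step checks out: the Sylvester (matrix-determinant) reduction to a $d\times d$ determinant; the bound $|\det(I_d+M)|\leq\prod_j(1+\sigma_j(M))$, which in any case follows from the singular-value Weyl inequality $\sigma_j(I+M)\leq 1+\sigma_j(M)$; the submultiplicativity $\sigma_j(DM')\leq\|D\|_{\mathrm{op}}\,\sigma_j(M')$; and the compression inequality $\sigma_j(U^TAU)\leq\sigma_j(A)$ via the best low-rank approximation characterisation of singular values. Two minor remarks. First, the detour through Sylvester's identity and the compression step is not strictly needed: writing $\det(\mathbf{C}_1+\mathbf{C}_2)=\det(\mathbf{C}_1)\det(I_N+\mathbf{C}_1^{-1}\mathbf{C}_2)$ directly and using $\sigma_j(\mathbf{C}_1^{-1}\mathbf{C}_2)\leq\sigma_j(\mathbf{C}_1^{-1})\,\|\mathbf{C}_2\|_{\mathrm{op}}=|\lambda_N(\mathbf{C}_2)|/|\lambda_j(\mathbf{C}_1)|$, together with $\sigma_j(\mathbf{C}_1^{-1}\mathbf{C}_2)=0$ for $j>d$ since $\mathrm{rank}(\mathbf{C}_2)\leq d$, already yields the claim. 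Second, your best-approximation argument for the compression inequality works for arbitrary $A$, not just symmetric ones, so the concern you raise about indefiniteness does not in fact arise.
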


\subsubsection{Proof of Lemma \ref{lem:small_r}}

Since $B(r)=O(N^{-\frac{3}{2}+\epsilon})$ uniformly in $r\in I_{N}$ for arbitrarily
small $\epsilon>0$,
\[
\lim_{N\to\infty}\int_{I_{N}}\sqrt{\frac{N}{2\pi}}e^{-\frac{1}{2}Nr^{2}+NB(r)}dr=1.
\]
Hence, it will be enough to show that uniformly in $r\in I_{N}$ and
$u_{1},\,u_{2}\in[-T,T]$,
\begin{equation}
\limsup_{N\to\infty}\Delta_{N}(r,u_{1},u_{2})\leq1.\label{eq:ubDelta}
\end{equation}
Fix some $\alpha,\beta,\gamma>0$  (to be determined below)
and let $\mathcal{E}=\mathcal{E}_{N}(r,u_{1},u_{2})= \mathcal{E}_1\cap \mathcal{E}_2$ where
%be the event that for any $i=1,2$  and  $k=0,1,\ldots, \lceil\log_2N^{\alpha}\rceil$,
\[
\mathcal{E}_1=\bigcap_{i=1}^2 \bigcap_{\ell=0}^{\lceil\log_2N^{\alpha}\rceil}
\Big\{\#\left\{j:|\lambda_{j}(\mathbf{A}_{N-1}^{\left(i\right)})|\in [s_\ell,s_{\ell+1})\right\}\leq  N^{1+\beta}s_{\ell} \Big\},\quad
\mathcal{E}_2=\bigcap_{i=1}^2
\Big\{\max_{j}|\lambda_{j}(\mathbf{B}_{N-1}^{\left(i\right)})|\leq N^{-\gamma}\Big\},
\]
with
\[
s_\ell=
\begin{cases}
	0, &\ell=0\\
	\frac{2^\ell}{N^{\alpha}}, &1\leq \ell\leq \lceil\log_2N^{\alpha}\rceil\\
	\infty, &\ell=\lceil\log_2N^{\alpha}\rceil+1.
	\end{cases}
\]
From Corollary \ref{cor:Miroslav-1}, on the event $\mathcal{E}$,
\begin{align*}
\left|\det\left(\mathbf{A}_{N-1}^{\left(i\right)}+\mathbf{B}_{N-1}^{\left(i\right)}\right)\right|
&\leq\left|\det\left(\mathbf{A}_{N-1}^{\left(i\right)}\right)\right|
\prod_{\ell=1}^{\lceil\log_2N^{\alpha}\rceil}\left(1+\frac{N^{-\gamma}}{s_\ell}\right)^{ N^{1+\beta} s_{\ell}}\\
&\leq  \left|\det\left(\mathbf{A}_{N-1}^{\left(i\right)}\right)\right|  \exp\Big\{
 N^{1+\beta-\gamma} \lceil\log_2N^{\alpha}\rceil \Big\}
,
\end{align*}
where we used the fact that $(1+y)^x\leq e^{xy}$ for $x,y>0$.
Hence if
\begin{equation}
	\label{eq:cond1}\gamma-\beta>1,
\end{equation}
then
\[
\limsup_{N\to\infty}\frac{\mathbb{E}\left\{ \prod_{i=1,2}\left|\det\left(\mathbf{A}_{N-1}^{\left(i\right)}+\mathbf{B}_{N-1}^{\left(i\right)}\right)\right|\indic_{\mathcal{E}}\right\} }{\prod_{i=1,2}\mathbb{E}\left\{ \left|\det\left(\mathbf{A}_{N-1}^{\left(i\right)}\right)\right|\right\} }\leq1.
\]

For a given $p\geq3$, if we are able to choose $\alpha,\beta,\gamma>0$
such that \eqref{eq:cond1}
holds and further,
uniformly in $r\in I_{N}$ and $u_{1},\,u_{2}\in[-T,T]$,
\begin{equation}
\limsup_{N\to\infty}\frac{\mathbb{E}\left\{ \prod_{i=1,2}\left|\det\left(\mathbf{A}_{N-1}^{\left(i\right)}+\mathbf{B}_{N-1}^{\left(i\right)}\right)\right|\indic_{\mathcal{E}^{c}}\right\} }{\prod_{i=1,2}\mathbb{E}\left\{ \left|\det\left(\mathbf{A}_{N-1}^{\left(i\right)}\right)\right|\right\} }=0,\label{eq:ABC1-2}
\end{equation}
then (\ref{eq:ubDelta}) follows and the proof is completed for the
same $p$.

By H{\"{o}}lder's inequality, for $k>2$ the numerator above is bounded
by
\begin{equation}
  \label{eq-Holder}
\prod_{i=1,2}\mathbb{E}\left\{ \left|\det\left(\mathbf{A}_{N-1}^{\left(i\right)}+\mathbf{B}_{N-1}^{\left(i\right)}\right)\right|^{k}\right\} ^{1/k}\P\{\mathcal{E}^{c}\}^{1/k'},
\end{equation}
where $\frac{2}{k}+\frac{1}{k'}=1$, i.e. $\frac{1}{k'}=\frac{k-2}{k}$.
By Lemma \ref{lem:detMomentBd}, for some constant $c$ and even $k$,
\[
\prod_{i=1,2}\frac{\mathbb{E}\left\{ \left|\det\left(\mathbf{A}_{N-1}^{\left(i\right)}+\mathbf{B}_{N-1}^{\left(i\right)}\right)\right|^{k}\right\} ^{1/k}}{\mathbb{E}\left\{ \left|\det\left(\mathbf{A}_{N-1}^{\left(i\right)}\right)\right|\right\} }\leq cN^{k-1}.
\]

Hence, (\ref{eq:ABC1-2}) will follow if
\begin{equation}
\limsup_{N\to\infty}\P\{\mathcal{E}^{c}\}N^{\frac{k(k-1)}{k-2}}=0,\label{eq:Ec}
\end{equation}
uniformly in the stated parameters.
%From Lemmas \ref{lem:B_ub} and \ref{lem:evgap} and a union bound,
%(\ref{eq:Ec}) holds uniformly in $r\in I_{N}$ and $u_{1},\,u_{2}\in[-T,T]$,
%if
From Lemma \ref{lem:evgap} and a union bound, uniformly in
the stated parameters,
%$r\in I_{N}$ and $u_{1},\,u_{2}\in[-T,T]$,
\[
%\limsup_{N\to\infty}
N^{\frac{k(k-1)}{k-2}}\P(\mathcal{E}_1^c)\leq
2\left(20N^{1-\alpha} + 10\sum_{\ell=1}^{\lceil\log_2N^\alpha\rceil-1} N^{-\beta}\right)N^{\frac{k(k-1)}{k-2}}\to_{N\to\infty} 0,
\]
assuming that
\begin{equation}
	\label{eq:cond2}
	\min\{\alpha-1, \beta\}>\frac{k(k-1)}{k-2}.
\end{equation}

Similarly, from Lemma \ref{lem:B_ub}, uniformly in the stated parameters,
\[N^{\frac{k(k-1)}{k-2}}\P(\mathcal{E}_2^c)\to_{N\to\infty} 0,\]
whenever
\begin{equation}
	\label{eq:cond3}
	\frac{p-3}{4}-\gamma>0.
\end{equation}

For $k=4$, $\alpha=7+\epsilon$, $\beta=6+\epsilon$ and $\gamma=7+2\epsilon$, the
inequalities \eqref{eq:cond1}, \eqref{eq:cond2} and \eqref{eq:cond3} hold  for small $\epsilon$ and
any $p\geq32$. This completes the proof.\qed

\subsubsection{Proof of Lemma \ref{lem:B_ub}}

Define $\hat{\mathbf{B}}_{N-1}^{\left(i\right)}$ as the matrix obtained
from $\mathbf{B}_{N-1}^{\left(i\right)}$ by replacing by $0$ all
the elements in the $N-2\times N-2$ upper-left block. Recall that
the $N-2\times N-2$ upper-left block of $\mathbf{B}_{N-1}^{\left(i\right)}$
is $\mathbf{G}_{N-2}^{\left(i\right)}\left(r\right)-\mathbf{G}_{N-2}^{\left(i\right)}\left(0\right)$.
Of course,
\[
\max_{j\leq N-1}|\lambda_{j}(\mathbf{B}_{N-1}^{\left(i\right)})|\leq\max_{j\leq N-1}|\lambda_{j}(\hat{\mathbf{B}}_{N-1}^{\left(i\right)})|+\max_{j\leq N-1}|\lambda_{j}(\mathbf{G}_{N-2}^{\left(i\right)}\left(r\right)-\mathbf{G}_{N-2}^{\left(i\right)}\left(0\right))|.
\]
Hence, by a union bound, to prove the lemma it will be sufficient
to prove under the same conditions that
\begin{align}
\P\left\{ \max_{j\leq N-1}|\lambda_{j}(\hat{\mathbf{B}}_{N-1}^{\left(i\right)})|\geq t_{N}\right\}  & \leq e^{-KN},\label{eq:lmax1}\\
\P\left\{ \max_{j\leq N-1}|\lambda_{j}(\mathbf{G}_{N-2}^{\left(i\right)}\left(r\right)-\mathbf{G}_{N-2}^{\left(i\right)}\left(0\right))|\geq t_{N}\right\}  & \leq e^{-KN}.\label{eq:lmax2}
\end{align}

Assume $p\geq3$. Recall the definition of $\mathbf{G}_{N-2}^{\left(i\right)}\left(r\right)$
from Section \ref{sec:Conditional-distributions}, using the i.i.d.
matrices $\bar{\mathbf{G}}=\bar{\mathbf{G}}_{N-2}$, $\bar{\mathbf{G}}^{\left(1\right)}=\bar{\mathbf{G}}_{N-2}^{\left(1\right)}$,
and $\bar{\mathcal{\mathbf{G}}}^{\left(2\right)}=\bar{\mathcal{\mathbf{G}}}_{N-2}^{\left(2\right)}$.
Note that, for small $r$,
\begin{align*}
\mathbf{G}_{N-2}^{\left(i\right)}\left(r\right)-\mathbf{G}_{N-2}^{\left(i\right)}\left(0\right) & =\Big(\sqrt{1-\left|r\right|^{p-2}}-1\Big)\bar{\mathbf{G}}^{\left(i\right)}\pm\sqrt{\left|r\right|^{p-2}}\bar{\mathbf{G}}\\
 & =\Big(-\frac{1}{2}\left|r\right|^{p-2}+O(\left|r\right|^{2(p-2)})\Big)\bar{\mathbf{G}}^{\left(i\right)}\pm\sqrt{\left|r\right|^{p-2}}\bar{\mathbf{G}}.
\end{align*}
And in distribution,
\[
\mathbf{G}_{N-2}^{\left(i\right)}\left(r\right)-\mathbf{G}_{N-2}^{\left(i\right)}\left(0\right)\overset{d}{=}f(r)\bar{\mathbf{G}},
\]
where $f(r)=\sqrt{\left|r\right|^{p-2}}(1+o(1))$.

Hence, by \cite[Lemma 6.3]{BDG}, denoting by $\lambda_{i}(\mathbf{T})$
the eigenvalues of a matrix $\mathbf{T}$,
\[
\P\left\{ \max_{j\leq N-2}\Big|\lambda_{j}\Big(\mathbf{G}_{N-2}^{\left(i\right)}(r)-\mathbf{G}_{N-2}^{\left(i\right)}(0)\Big)\Big|\geq t\right\} \leq2\exp\left\{ -\frac{(N-2)t^{2}}{10\left|r\right|^{p-2}}\right\} ,
\]
for all $r\in(-r_{0},r_{0})$, for some $r_{0}>0$ independent of
$N$. From this bound, (\ref{eq:lmax2}) follows.

Working with the expressions in Section \ref{sec:Conditional-distributions},
one can verify that
\begin{align*}
\left|1-\sqrt{\frac{\Sigma_{Z,11}\left(r\right)-\left|\Sigma_{Z,12}\left(r\right)\right|}{p(p-1)}}\right| & \leq C|r|^{p-3},\\
\sqrt{\frac{\left|\Sigma_{Z,12}\left(r\right)\right|}{p(p-1)}} & \leq C|r|^{\frac{p-3}{2}},\\
\left|1-\sqrt{\frac{\Sigma_{Q,11}\left(r\right)-\left|\Sigma_{Q,12}\left(r\right)\right|}{2p(p-1)}}\right| & \leq C|r|^{p-4},\\
\sqrt{\frac{\left|\Sigma_{Q,12}\left(r\right)\right|}{2p(p-1)}} & \leq C|r|^{\frac{p-4}{2}},\\
\frac{m_{i}\left(r,u_{1},u_{2}\right)}{\sqrt{p(p-1)}} & \leq CT|r|^{p-2},
\end{align*}
for some $C=C(p)$ and any $u_{1},u_{2}\in[-T,T]$ and $r\in(-r_{0},r_{0})$
for small enough $r_{0}>0$, independent of $N$. From these bounds,
for any $u_{1},u_{2}\in[-T,T]$ and $r\in(-r_{0},r_{0})$, the Frobenius
norm $\|\hat{\mathbf{B}}_{N-1}^{\left(i\right)}\|$ is stochastically
dominated by
\[
\frac{4C|r|^{\frac{p-3}{2}}}{\sqrt{N-1}}\chi_{N-2}+\frac{4C|r|^{\frac{p-4}{2}}}{\sqrt{N-1}}|Y|+\frac{CT|r|^{p-2}}{\sqrt{N-1}},
\]
where $\chi_{N-2}$ is a chi variable of degree $N-2$ and $Y$ is
an independent standard Gaussian variable. By a union bound, (\ref{eq:lmax2})
easily follows.\qed

\subsubsection{Proof of Lemma \ref{lem:evgap}}
Using Markov's inequality we obtain that
\[\P\left\{ \mathbf{G}\text{ has at least $t$ eigenvalues in }I\right\}
\leq \frac{N\int_I \rho_N(x) dx}{t},\]
where $\rho_N(x)dx= \E L_N(dx)$. By e.g. \cite[(7.1.45)]{Forrester},
$\rho_N(x)$ converges uniformly  to the semicircle density,
which is bounded above. The claim follows (with the constant $10$ having no particular significance).
\qed

\subsection{\label{subsec:Combining-the-bounds}Proof of Theorem \ref{thm:main}}

The ratio in (\ref{eq:moment_matching}) is bounded from below by
$1$ for any $N$, so we only need to prove the matching upper bound
in the limit. By Lemma \ref{lem:large}, for some sequences of real
numbers $a_{N},\,b_{N}\to\min\{u,0\}$ and $\rho_{N}\to0$,
\[
\limsup_{N\to\infty}\frac{\mathbb{E}\big\{\left({\rm Crt}_{N}\left(\left(-\infty,u\right)\right)\right)^{2}\big\}}{\big(\mathbb{E}\left\{ {\rm Crt}_{N}\left(\left(-\infty,u\right)\right)\right\} \big)^{2}}=\limsup_{N\to\infty}\frac{\mathbb{E}\big\{\left[{\rm Crt}_{N}\left(\left(a_{N},b_{N}\right),(-\rho_{N},\rho_{N})\right)\right]_{2}\big\}}{\big(\mathbb{E}\left\{ {\rm Crt}_{N}\left(\left(a_{N},b_{N}\right)\right)\right\} \big)^{2}}.
\]

By Corollary \ref{cor:smalloverlaps}, the right-hand side above is
bounded by
\begin{align*}
 & \limsup_{N\to\infty}\sup_{u_1,u_2\in (a_{N},b_{N}) }\int_{(-\rho_{N},\rho_{N})\setminus(-r_{N},r_{N})}\sqrt{\frac{N}{2\pi}}e^{-\frac{1}{2}Nr^{2}+NB(r)}\Delta_{N}(r,u_{1},u_{2})dr\\
 & +\limsup_{N\to\infty}\sup_{u_1,u_2\in (a_{N},b_{N}) }\int_{(-r_{N},r_{N})}\sqrt{\frac{N}{2\pi}}e^{-\frac{1}{2}Nr^{2}+NB(r)}\Delta_{N}(r,u_{1},u_{2})dr,
\end{align*}
where we set   $r_{N}:=C\sqrt{\log N/N}$ with
some  constant $C>\sqrt 2$.

By Lemma \ref{lem:Intermediate} the first term above is equal to
$0$ and by Lemma \ref{lem:small_r} the second term is
bounded above by, and therefore equal to, $1$.
This completes the proof.\qed

\subsection{\label{sec:pfofcor1}Proof of Corollary \ref{cor:main}}  For $E>-E_\infty$ and for $E\in(-\Es,-E_\infty)$, the corollary follows directly from \eqref{eq-120521a} and \cite[Corollary 2]{2nd}, respectively. For $E=-E_\infty$, it follows by monotonicity of $u\mapsto {\rm Crt}_{N}((-\infty,u))$ and continuity of $\Theta_p(u)$. \qed

\appendix
\section{\label{sec:Conditional-distributions}Conditional distribution of
the Hessians}

In this appendix we describe, based on \cite{2nd},
the law of the random variables in Lemma
\ref{lem:KR2ndmoment}, which expresses the expectation of $\left[{\rm Crt}_{N}\left(B,I_{R}\right)\right]_{2}$.
Define
\begin{equation}
f_{N}\left(\boldsymbol{\sigma}\right)=f_{N,p}\left(\boldsymbol{\sigma}\right):=\frac{1}{\sqrt{N}}H_{N,p}\left(\sqrt{N}\boldsymbol{\sigma}\right).\label{eq:f-1}
\end{equation}
Given a (piecewise) smooth orthonormal frame field $E=\left(E_{i}\right)_{i=1}^{N-1}$
on $S^{N-1}:=\{\bx\in\R^{N}:\,\|\bx\|=1\}$, define
\[
\nabla f_{N}\left(\boldsymbol{\sigma}\right):=\left(E_{i}f_{N}\left(\boldsymbol{\sigma}\right)\right)_{i=1}^{N-1},\,\,\nabla^{2}f_{N}\left(\boldsymbol{\sigma}\right):=\left(E_{i}E_{j}f_{N}\left(\boldsymbol{\sigma}\right)\right)_{i,j=1}^{N-1}.
\]

Let $r\in\left(-1,1\right)$ and suppose that $\bs_{1}$ and $\bs_{2}$
are two points in $S^{N-1}$ such that $\bs_{1}\cdot\bs_{2}=r$. The
following was shown in  \cite[Lemma 13]{2nd}. First, conditional
on $\nabla f\left(\bs_{1}\right)=\nabla f\left(\bs_{2}\right)=0$,
the pair $(f_{N}(\boldsymbol{\sigma}_{1}),f_{N}(\boldsymbol{\sigma}_{2}))$
has the same distribution as $\left(U_{1}\left(r\right),U_{2}\left(r\right)\right)\sim N\left(0,\Sigma_{U}\left(r\right)\right)$,
where the matrix $\Sigma_{U}\left(r\right)$ is given by (\ref{eq:26})
below. Second, for an appropriate choice of $E=\left(E_{i}\right)_{i=1}^{N-1}$,
conditional on $f\left(\bs_{1}\right)=u_{1}$, $f\left(\boldsymbol{\sigma}_{2}\right)=u_{2}$
and $\nabla f\left(\bs_{1}\right)=\nabla f\left(\bs_{2}\right)=0$,
the pair
\[
\left(\frac{\nabla^{2}f\left(\bs_{i}\right)}{\sqrt{\left(N-1\right)p\left(p-1\right)}}\right)_{i=1,2}
\]
has the same law as
\begin{equation}
\left(\mathbf{X}_{N-1}^{\left(i\right)}(r)-\sqrt{\frac{1}{N-1}\frac{p}{p-1}}u_{i}I+\mathbf{E}_{N-1}^{\left(i\right)}(r,u_{1},u_{2})\right)_{i=1,2}\label{eq:XE}
\end{equation}
where the $N-1\times N-1$ matrices in (\ref{eq:XE}) are defined
as follows. The $N-1,N-1$ entry of $\mathbf{E}_{N-1}^{\left(i\right)}(r,u_{1},u_{2})$
is equal to $\frac{m_{i}\left(r,u_{1},u_{2}\right)}{\sqrt{(N-1)p(p-1)}}$,
(see (\ref{eq:m_i})) and all its other entries are zero. The matrices
$(\mathbf{X}_{N-1}^{\left(1\right)}(r),\mathbf{X}_{N-1}^{\left(2\right)}(r))$
are jointly Gaussian with block structure
\begin{align}
\mathbf{X}_{N-1}^{\left(i\right)}\left(r\right) & =\left(\begin{array}{cc}
\mathbf{G}_{N-2}^{\left(i\right)}\left(r\right) & Z^{\left(i\right)}\left(r\right)\\
\left(Z^{\left(i\right)}\left(r\right)\right)^{T} & Q^{\left(i\right)}\left(r\right)
\end{array}\right),\label{eq:ghat-1}
\end{align}
where:
\begin{enumerate}
\item The random elements $\left(\mathbf{G}_{N-2}^{\left(1\right)}\left(r\right),\mathbf{G}_{N-2}^{\left(2\right)}\left(r\right)\right)$,
$\left(Z^{\left(1\right)}\left(r\right),Z^{\left(2\right)}\left(r\right)\right)$,
and $\left(Q^{\left(1\right)}\left(r\right),Q^{\left(2\right)}\left(r\right)\right)$
are independent.
\item The matrices $\mathbf{G}^{\left(i\right)}\left(r\right)=\mathbf{G}_{N-2}^{\left(i\right)}\left(r\right)$
are $N-2\times N-2$ random matrices such that $\sqrt{\frac{N-1}{N-2}}\mathbf{G}^{\left(i\right)}\left(r\right)$
is a GOE matrix and, in distribution,
\[
\left(\begin{array}{c}
\vphantom{\left\{ \left\{ \left\{ \right\} ^{2}\right\} ^{2}\right\} ^{2}}\mathbf{G}^{\left(1\right)}\left(r\right)\\
\vphantom{\left\{ \left\{ \left\{ \right\} ^{2}\right\} ^{2}\right\} ^{2}}\mathbf{G}^{\left(2\right)}\left(r\right)
\end{array}\right)=\left(\begin{array}{c}
\vphantom{\left\{ \left\{ \left\{ \right\} ^{2}\right\} ^{2}\right\} ^{2}}\sqrt{1-\left|r\right|^{p-2}}\bar{\mathbf{G}}^{\left(1\right)}+\left({\rm sgn}\left(r\right)\right)^{p}\sqrt{\left|r\right|^{p-2}}\bar{\mathbf{G}}\\
\vphantom{\left\{ \left\{ \left\{ \right\} ^{2}\right\} ^{2}\right\} ^{2}}\sqrt{1-\left|r\right|^{p-2}}\bar{\mathbf{G}}^{\left(2\right)}+\sqrt{\left|r\right|^{p-2}}\bar{\mathbf{G}}
\end{array}\right),
\]
where $\bar{\mathbf{G}}=\bar{\mathbf{G}}_{N-2}$, $\bar{\mathbf{G}}^{\left(1\right)}=\bar{\mathbf{G}}_{N-2}^{\left(1\right)}$,
and $\bar{\mathbf{G}}^{\left(2\right)}=\bar{\mathcal{\mathbf{G}}}_{N-2}^{\left(2\right)}$
are independent and have the same law as $\mathbf{G}^{\left(i\right)}\left(r\right)$.
\item The column vectors $Z^{\left(i\right)}\left(r\right)=\left(Z_{j}^{\left(i\right)}\left(r\right)\right)_{j=1}^{N-2}$
are jointly Gaussian, and for $j\leq N-2$, $\left(Z_{j}^{\left(1\right)}\left(r\right),\,Z_{j}^{\left(2\right)}\left(r\right)\right)$
are i.i.d. with
\[
\left(Z_{j}^{\left(1\right)}\left(r\right),\,Z_{j}^{\left(2\right)}\left(r\right)\right)\sim N\left(0,\,\left(\left(N-1\right)p\left(p-1\right)\right)^{-1}\cdot\Sigma_{Z}\left(r\right)\right),
\]
where $\Sigma_{Z}\left(r\right)$ is given in (\ref{eq:sigmaZ}).
\item Lastly, $Q^{\left(i\right)}\left(r\right)$ are two Gaussian random
variables with
\[
\left(Q^{\left(1\right)}\left(r\right),\,Q^{\left(2\right)}\left(r\right)\right)\sim N\left(0,\,\left(\left(N-1\right)p\left(p-1\right)\right)^{-1}\cdot\Sigma_{Q}\left(r\right)\right),
\]
where $\Sigma_{Q}\left(r\right)$ is given in (\ref{eq:SigmaQ}).
\end{enumerate}
To define the covariances above, we first define, for any $r\in\left(-1,1\right)$,
\[
\begin{array}{ll}
a_{1}\left(r\right)=\frac{1}{p\left(1-r^{2p-2}\right)}, & a_{2}\left(r\right)=\frac{1}{p\left[1-\left(r^{p}-\left(p-1\right)r^{p-2}\left(1-r^{2}\right)\right)^{2}\right]},\\
a_{3}\left(r\right)=\frac{-r^{p-1}}{p\left(1-r^{2p-2}\right)}, & a_{4}\left(r\right)=\frac{-r^{p}+\left(p-1\right)r^{p-2}\left(1-r^{2}\right)}{p\left[1-\left(r^{p}-\left(p-1\right)r^{p-2}\left(1-r^{2}\right)\right)^{2}\right]},\\
b_{1}\left(r\right)=-p & b_{2}\left(r\right)=-pr^{p}\\
\,\,+a_{2}\left(r\right)p^{3}r^{2p-2}\left(1-r^{2}\right), & \,\,-a_{4}\left(r\right)p^{3}r^{2p-2}\left(1-r^{2}\right),\\
b_{3}\left(r\right)= & b_{4}\left(r\right)=p\left(p-1\right)r^{p-2}\left(1-r^{2}\right)\\
\,\,a_{2}\left(r\right)p^{2}\left(p-1\right)r^{2p-4}\left(1-r^{2}\right)\left[-\left(p-2\right)+pr^{2}\right], & \,\,-a_{4}\left(r\right)p^{2}\left(p-1\right)r^{2p-4}\left(1-r^{2}\right)\left[-\left(p-2\right)+pr^{2}\right].
\end{array}
\]

We define the covariance matrices $\Sigma_{U}\left(r\right)=\left(\Sigma_{U,ij}\left(r\right)\right)_{i,j=1}^{2,2}$,
$\Sigma_{Z}\left(r\right)=\left(\Sigma_{Z,ij}\left(r\right)\right)_{i,j=1}^{2,2}$
and $\Sigma_{Q}\left(r\right)=\left(\Sigma_{Q,ij}\left(r\right)\right)_{i,j=1}^{2,2}$
by
\begin{equation}
\Sigma_{U}\left(r\right)=-\frac{1}{p}\left(\begin{array}{cc}
b_{1}\left(r\right) & b_{2}\left(r\right)\\
b_{2}\left(r\right) & b_{1}\left(r\right)
\end{array}\right),\label{eq:26}
\end{equation}
by
\begin{equation}
\begin{aligned}\Sigma_{Z,11}\left(r\right) & =\Sigma_{Z,22}\left(r\right)=p\left(p-1\right)-a_{1}\left(r\right)p^{2}\left(p-1\right)^{2}r^{2p-4}\left(1-r^{2}\right),\\
\Sigma_{Z,12}\left(r\right) & =\Sigma_{Z,21}\left(r\right)=p\left(p-1\right)^{2}r^{p-1}-p\left(p-1\right)\left(p-2\right)r^{p-3}+a_{3}\left(r\right)p^{2}\left(p-1\right)^{2}r^{2p-4}\left(1-r^{2}\right),
\end{aligned}
\label{eq:sigmaZ}
\end{equation}
and
\begin{equation}
\begin{aligned}\Sigma_{Q,11}\left(r\right) & =\Sigma_{Q,22}\left(r\right)=2p\left(p-1\right)-a_{2}\left(r\right)\left(1-r^{2}\right)\left[p\left(p-1\right)r^{p-3}\left(pr^{2}-\left(p-2\right)\right)\right]^{2}\\
 & -\left(b_{3}\left(r\right),\,b_{4}\left(r\right)\right)\left(\Sigma_{U}\left(r\right)\right)^{-1}\left(\begin{array}{c}
b_{3}\left(r\right)\\
b_{4}\left(r\right)
\end{array}\right),\\
\Sigma_{Q,12}\left(r\right) & =\Sigma_{Q,21}\left(r\right)=p^{4}r^{p}-2p\left(p-1\right)\left(p^{2}-2p+2\right)r^{p-2}+p\left(p-1\right)\left(p-2\right)\left(p-3\right)r^{p-4}\\
 & +a_{4}\left(r\right)p^{2}r^{2p-6}\left(1-r^{2}\right)\left(p^{2}r^{2}-\left(p-1\right)\left(p-2\right)\right)^{2}\\
 & -\left(b_{1}\left(r\right)+b_{3}\left(r\right),\,b_{2}\left(r\right)+b_{4}\left(r\right)\right)\left(\Sigma_{U}\left(r\right)\right)^{-1}\left(\begin{array}{c}
b_{2}\left(r\right)+b_{4}\left(r\right)\\
b_{1}\left(r\right)+b_{3}\left(r\right)
\end{array}\right).
\end{aligned}
\label{eq:SigmaQ}
\end{equation}
Finally, we define
\begin{equation}
\begin{aligned}m_{1}\left(r,u_{1},u_{2}\right) & =\left(b_{3}\left(r\right),b_{4}\left(r\right)\right)\left(\Sigma_{U}\left(r\right)\right)^{-1}\left(u_{1},u_{2}\right)^{T},\\
m_{2}\left(r,u_{1},u_{2}\right) & =m_{1}\left(r,u_{2},u_{1}\right).
\end{aligned}
\label{eq:m_i}
\end{equation}
\bibliographystyle{alpha}
\bibliography{master}

\begin{thebibliography}{BABM21b}

\bibitem[ABA13]{ABA2}
A.~Auffinger and G.~Ben~Arous.
\newblock Complexity of random smooth functions on the high-dimensional sphere.
\newblock {\em Ann. Probab.}, 41(6):4214--4247, 2013.

\bibitem[ABA{\v{C}}13]{A-BA-C}
A.~Auffinger, G.~Ben~Arous, and J.~{\v{C}}ern{\'y}.
\newblock Random matrices and complexity of spin glasses.
\newblock {\em Comm. Pure Appl. Math.}, 66(2):165--201, 2013.

\bibitem[AC14]{AuffingerChenBipartite}
A.~Auffinger and W.-K. Chen.
\newblock Free energy and complexity of spherical bipartite models.
\newblock {\em J. Stat. Phys.}, 157(1):40--59, 2014.

\bibitem[AG20]{AuffingerGold}
A.~Auffinger and J.~Gold.
\newblock The number of saddles of the spherical $p$-spin model.
\newblock {\em arXiv:2007.09269}, 2020.

\bibitem[AT07]{RFG}
R.~J. Adler and J.~E. Taylor.
\newblock {\em Random fields and geometry}.
\newblock Springer Monographs in Mathematics. Springer, New York, 2007.

\bibitem[BABM21a]{BA-B-MK2}
G.~Ben~Arous, P.~Bourgade, and B.~McKenna.
\newblock Exponential growth of random determinants beyond invariance.
\newblock {\em arXiv:2105.05000}, 2021.

\bibitem[BABM21b]{BA-B-MK1}
G.~Ben~Arous, P.~Bourgade, and B.~McKenna.
\newblock Landscape complexity beyond invariance and the elastic manifold.
\newblock {\em arXiv:2105.05051}, 2021.

\bibitem[BADG01]{BDG}
G.~Ben~Arous, A.~Dembo, and A.~Guionnet.
\newblock Aging of spherical spin glasses.
\newblock {\em Probab. Theory Related Fields}, 120(1):1--67, 2001.

\bibitem[BAJ21]{BenArousJagannathShattering}
G.~Ben~Arous and A.~Jagannath.
\newblock Shattering versus metastability in spin glasses.
\newblock {\em arXiv:2104.08299}, 2021.

\bibitem[BASZ20]{geometryMixed}
G.~Ben~Arous, E.~Subag, and O.~Zeitouni.
\newblock Geometry and temperature chaos in mixed spherical spin glasses at low
  temperature: the perturbative regime.
\newblock {\em Comm. Pure Appl. Math.}, 73(8):1732--1828, 2020.

\bibitem[BH01]{BrezinHikam}
E.~Brezin and S.~Hikami.
\newblock Characteristic polynomial of real symmetric random matrices.
\newblock {\em Comm. Math. Phys.}, 223:363--382, 2001.

\bibitem[Che13]{Chen}
W.-K. Chen.
\newblock The {A}izenman-{S}ims-{S}tarr scheme and {P}arisi formula for mixed
  {$p$}-spin spherical models.
\newblock {\em Electron. J. Probab.}, 18:no. 94, 14, 2013.

\bibitem[CLR03]{TAP-pSPSG4}
A.~Crisanti, L.~Leuzzi, and T.~Rizzo.
\newblock The complexity of the spherical $p$-spin spin glass model, revisited.
\newblock {\em The European Physical Journal B - Condensed Matter and Complex
  Systems}, 36(1):129--136, 2003.

\bibitem[CPS]{TAPChenPanchenkoSubag}
W.-K. Chen, D.~Panchenko, and E.~Subag.
\newblock The generalized {TAP} free energy.
\newblock {\em to appear in CPAM}.
\newblock arXiv:1812.05066.

\bibitem[CPS20]{TAPIIChenPanchenkoSubag}
W.-K. Chen, D.~Panchenko, and E.~Subag.
\newblock The generalized {TAP} free energy {II}.
\newblock {\em Commun. Math. Phys.}, 2020.

\bibitem[CS92]{Crisanti1992}
A.~Crisanti and H.-J. Sommers.
\newblock The spherical p-spin interaction spin glass model: the statics.
\newblock {\em Zeitschrift f{\"{u}}r Physik B Condensed Matter},
  87(3):341--354, 1992.

\bibitem[CS95]{CrisantiSommersTAPpspin}
A.~Crisanti and H.-J. Sommers.
\newblock Thouless-anderson-palmer approach to the spherical p-spin spin glass
  model.
\newblock {\em J. Phys. I France}, 5:805--813, 1995.

\bibitem[CS17]{ChenSen}
W.-K. Chen and A.~Sen.
\newblock Parisi formula, disorder chaos and fluctuation for the ground state
  energy in the spherical mixed $p$-spin models.
\newblock {\em Comm. Math. Phys.}, 350(1):129--173, 2017.

\bibitem[Fie71]{Miroslav}
M.~Fiedler.
\newblock Bounds for the determinant of the sum of hermitian matrices.
\newblock {\em Proc. Amer. Math. Soc.}, 30:27--31, 1971.

\bibitem[For10]{Forrester}
P.~Forrester.
\newblock {\em Log--gases and random matrices}.
\newblock Princeton University Press, 2010.

\bibitem[FW07]{FyodorovWilliams}
Y.~V. Fyodorov and I.~Williams.
\newblock Replica symmetry breaking condition exposed by random matrix
  calculation of landscape complexity.
\newblock {\em J. Stat. Phys.}, 129(5-6):1081--1116, 2007.

\bibitem[Fyo04]{Fyodorov2004}
Y.~V. Fyodorov.
\newblock Complexity of random energy landscapes, glass transition, and
  absolute value of the spectral determinant of random matrices.
\newblock {\em Phys. Rev. Lett.}, 92(24):240601, 4, 2004.

\bibitem[JT17]{JagannathTobascoLowTemp}
A.~Jagannath and I.~Tobasco.
\newblock Low temperature asymptotics of spherical mean field spin glasses.
\newblock {\em Comm. Math. Phys.}, 352(3):979--1017, 2017.

\bibitem[Kiv21]{KivimaeBipartite}
P.~Kivimae.
\newblock The ground state energy and concentration of complexity in spherical
  bipartite models.
\newblock {\em arXiv:2107.13138}, 2021.

\bibitem[KTJ76]{Kosterlitz}
J.~M. Kosterlitz, D.~J. Thouless, and Raymund~C. Jones.
\newblock Spherical model of a spin-glass.
\newblock {\em Phys. Rev. Lett.}, 36:1217--1220, May 1976.

\bibitem[McK21]{McKennaBipartite}
B.~McKenna.
\newblock Complexity of bipartite spherical spin glasses.
\newblock {\em arXiv:2105.05043}, 2021.

\bibitem[SK75]{SK75}
D.~Sherrington and S.~Kirkpatrick.
\newblock Solvable model of a spin glass.
\newblock {\em Phys. Rev. Lett.}, 35:1792--1795, 1975.

\bibitem[Sub17a]{2nd}
E.~Subag.
\newblock The complexity of spherical {$p$}-spin models---{A} second moment
  approach.
\newblock {\em Ann. Probab.}, 45(5):3385--3450, 2017.

\bibitem[Sub17b]{geometryGibbs}
E.~Subag.
\newblock The geometry of the {G}ibbs measure of pure spherical spin glasses.
\newblock {\em Invent. Math.}, 210(1):135--209, 2017.

\bibitem[Sub18]{FElandscape}
E.~Subag.
\newblock Free energy landscapes in spherical spin glasses.
\newblock {\em arXiv:1804.10576}, 2018.

\bibitem[Sub21]{SubagPspinTAP}
E.~Subag.
\newblock The free energy of spherical pure p-spin models -- computation from
  the {TAP} approach.
\newblock {\em arXiv:2101.04352}, 2021.

\bibitem[SZ17]{pspinext}
E.~Subag and O.~Zeitouni.
\newblock The extremal process of critical points of the pure {$p$}-spin
  spherical spin glass model.
\newblock {\em Probab. Theory Related Fields}, 168(3-4):773--820, 2017.

\bibitem[Tal06]{Talag}
M.~Talagrand.
\newblock Free energy of the spherical mean field model.
\newblock {\em Probab. Theory Related Fields}, 134(3):339--382, 2006.

\end{thebibliography}

\end{document}